\theoremstyle{plain}
\newtheorem{theorem}{Theorem}[section]
\newtheorem{proposition}[theorem]{Proposition}
\newtheorem{definition}[theorem]{Definition}
\newtheorem{fact}[theorem]{Fact}
\newtheorem{remark}[theorem]{Remark}
\newtheorem*{claim}{Claim}
\newtheorem{notation}[theorem]{Notation}
\newcommand{\Ad}{\mathop{\mathrm{Ad}}\nolimits}
\newcommand{\ev}{\mathop{\mathrm{ev}}\nolimits}
\newcommand{\Tr}{\mathop{\mathrm{Tr}}\nolimits}
\newcommand{\R}{\mathbb{R}}
\newcommand{\Z}{\mathbb{Z}}
\newcommand{\C}{\mathbb{C}}
\newcommand{\pairing}[2]{\langle #1, #2 \rangle}
\newcommand{\Sym}{\operatorname{Sym}}
\newcommand{\trans}{{}^t\!}
\newcommand{\TODO}{{\bf TODO}}
\newcommand{\spanvec}{\operatorname{span}}
\newcounter{mycounter}
\begin{document}

\title[A method to construct exponential families]
{A method to construct exponential families by representation theory}
\author{Koichi Tojo and Taro Yoshino}
\subjclass[2010]{Primary 62H10; Secondary 62H11, 20G05,	22F30}
%MSC2010
\keywords{exponential family; representation theory; homogeneous space; transformation model; harmonic exponential family}
\address{RIKEN Center for Advanced Intelligence Project, Nihonbashi 1-chome Mitsui Building, 15th floor, 1-4-1 Nihonbashi, Chuo-ku, Tokyo 103-0027, Japan}
\email{koichi.tojo@riken.jp}
%\address{Graduate School of Mathematical Science, The University of Tokyo,\\ 3-8-1 Komaba, Meguro-ku, Tokyo 153-8914, Japan}
%\email{tojo@ms.u-tokyo.ac.jp}
\address{Graduate School of Mathematical Science, The University of Tokyo,\\ 3-8-1 Komaba, Meguro-ku, Tokyo 153-8914, Japan
}
\email{yoshino@ms.u-tokyo.ac.jp}

%\thanks{The first author is supported by
%JSPS KAKENHI Grant Number JP16K17594.
%and the second author is supported by
%?? }
\date{\today}
\maketitle

\begin{abstract}
In this paper, we give a method to construct ``good'' exponential families systematically by representation theory. 
More precisely, we consider a homogeneous space $G/H$ as a sample space and 
construct an exponential family invariant under the transformation group $G$ by using a representation of $G$. 
The method generates widely used exponential families such as 
normal, gamma, Bernoulli, categorical, Wishart, von Mises, Fisher--Bingham and hyperboloid distributions. 
%Insert your abstract here. Include keywords, PACS and mathematical
%subject classification numbers as needed.
%\keywords{exponential family \and representation theory \and homogeneous space \and transformation model \and harmonic exponential family}
% \PACS{PACS code1 \and PACS code2 \and more}
% \subclass{62H10  \and 62H11 \and 20G05 \and	22F30}
%\keywords{exponential family \and representation theory \and homogeneous space \and harmonic analysis}
%\subjclass[2010]{Primary 62H10 \and Secondary 62H11 \and 20G05 \and	22F30}

\end{abstract}

\setcounter{tocdepth}{3}

\tableofcontents

\section{Introduction}\label{sec:intro}
\begin{comment}
Exponential families are important subjects of study in the field of 
information geometry and are often used for Bayes inference 
because they have conjugate priors, which have been well studied \cite{diaconis-ylvisoker}. 
By definition, there are infinitely many exponential families, 
however, only a small part of them are widely used. 
Most of the useful families have the same symmetry as the sample space. 
More precisely, the sample space can be regarded as a homogeneous space $G/H$ and the family of distributions are $G$-invariant. 
We suggest a method to construct exponential families invariant under the transformation groups systematically by using representation theory. 
Though the families obtained by this method are limited, 
in fact, the method generates many practical families of distributions 
such as normal, gamma, Bernoulli, categorical, Wishart, von Mises and Fisher-Bingham distributions. 
We can also construct a good family of distributions on the upper half plane which is compatible with the Poincar\'e metric. 
\\

別バージョン\\
\end{comment}
Exponential families are important subjects of study in the field of 
information geometry and are often used for Bayesian inference 
because they have conjugate priors, which have been well studied (\cite{diaconis-ylvisoker}). 
By definition, there are infinitely many exponential families, 
however, only a small part of them are widely used such as 
Bernoulli, categorical, normal, multivariate normal, gamma, inverse gamma, von Mises, von Mises--Fisher, Fisher--Bingham, Wishart and hyperboloid distributions, which have been investigated individually. 

We want to understand them systematically. 
It should be nice if we have a method to construct exponential families which has the following two properties: 
\begin{enumerate}
\item The method can generate many well-known good distributions. 
\item Distributions obtained by the method are limited in principle. 
\end{enumerate}
%a natural question arise as follows: 
%\begin{question}
%Find a method to construct good exponential families including widely used distributions as mentioned above systematically. 
%\end{question}
%In this paper, we give an answer by representation theory. 
In this paper, we introduce a method satisfying them. 
In our method, representation theory plays an important role. 
Actually, most of useful distributions have the same symmetry as the sample spaces. 
More precisely, the sample space can be regarded as a homogeneous space $G/H$ and the family of distributions is $G$-invariant. 

In Section~\ref{sec:general_method}, 
we introduce our method and show that the obtained families are invariant under the transformation grourps $G$. 
In Section~\ref{sec:examples}, 
by the method, we realize examples as mentioned above. 

\section{Related work}
\subsection{Transformation model}
A transformation model is one of the parametric models in statistics, 
which enables us to understand many families of probability measures on sample spaces comprehensively and 
have good properties on statistics (\cite{Barndorff-Nielsen89}). 
However, transformation model has a problem that  
the overlap between transformation models and exponential families is too small. 
Therefore, it fails to include many important families of probability measures on sample spaces which admit group actions (see Section~1.3 in \cite{Barndorff-Nielsen78a}). 
Our method clear up the problem by using representation theory. 
In fact, our method generates many useful exponential families invariant under a transformation group (see Table~\ref{tab:1}). 

%transformation modelの歴史
%パラメータ空間がGとなり扱いやすい
%我々の方法でできるものはtransformation modelとは限らない．
%実際一つの元によって生成できるとは限らない．
%transformation modelとexponentialとの共通部分は少ない
%これを修正し，representation modelとexponential modelの共通部分としたら多くの例を含むことができる

%CW15について書く
\subsection{Harmonic exponential families}
Our method can be regarded as a generalization of harmonic exponential families suggested in \cite{cohen-welling}. 
They studied densities 
%on a compact Lie group $G$ or its homogeneous spaces $G/H$ such as torus and sphere 
on a compact Lie group $G$ or its homogeneous spaces $G/H$ such as a torus or a sphere
by using unitary representations of $G$. 
As mentioned in Section~7 (``Discussion and Future Work'') in \cite{cohen-welling}, 
they wished an extension to noncompact groups. 
Our method results in a generalization of their method including noncompact groups. 
In the case where $G$ is compact, there is a unique invariant measure on $G$ or $G/H$ up to positive constant. 
On the other hand, in the case where $G$ is noncompact, it does not hold. 
For example, let $G$ be the affine transformation group $\R^\times \ltimes \R$ and $H$ the closed subgroup $\R^\times$, then $G/H\simeq \R$ admits no $G$-invariant measures. 
However, $G/H$ admits relatively $G$-invariant measures. 
Therefore, we use a relatively $G$-invariant measures on $G/H$ instead of a $G$-invariant measure. 
Since a relatively $G$-invariant measure on $G/H$ is not unique, we have a lot of choice of relatively $G$-invariant measures. 
Our method does not choose one of them but take ``all'' of them (see Subsection~\ref{subsec:our_method}). 
%Then since it is not trivial that a family obtained by our method is an exponential family, we give a proof in Subsection~\ref{subsec:pf_expf}.  

\section{A method to construct good exponential families}\label{sec:general_method}
In this section, we introduce a method to construct a family of distributions on a homogeneous space $G/H$ by representation theory. 
Here $G$ is a Lie group with finitely many connected components and $H$ is a closed subgroup of $G$. 
Moreover, we show that the families obtained by our method are exponential families on $G/H$. 
\subsection{Our method}\label{subsec:our_method}
We put $X:=G/H$ as above, which is regarded as a sample space. 
Our method generates a family $\{p_\theta\}_{\theta\in \Theta}$ of probability measures on $X$ from
two inputs as follows:
\begin{enumerate}
\item Let $\rho\colon G\to GL(V)$ be a  representation of $G$ on a finite dimensional real vector space $V$. 
\item Let $v_0$ be an $H$-fixed vector in $V$. 
\end{enumerate}
We consider the following vector space (see Fact~\ref{fact:fin_dim}): 
\[W_0(G,H):=\{ \tau\colon  G\to \R\ |\ \tau \text{ is a continuous group homomorphism with } \tau|_H=0\}. \]
We assume that there exist nonzero relatively $G$-invariant measures on $G/H$ and take one of them, say $\nu$. 
The family $\mathcal{P}$ below does not depend on the choice of $\nu$
(Remark~\ref{remark:choice_of_nu}). 
%(see Remark~\ref{rem:choice_of_rel_inv})). 
Then we set 
\begin{align*} 
d\tilde{p}_\theta(x)&:=d\tilde{p}_{\varphi,\tau}(x)=\exp(-\pairing{\varphi}{\rho(x) v_0}+\tau(x))d\nu(x)\\
&\hspace{1cm} (\theta=(\varphi, \tau)\in V^\vee\times W_0(G,H), x=gH\in G/H), \\
\Theta&:=\left\{ \theta=(\varphi,\tau)\in V^\vee \times W_0(G,H)\ \Big|\ \int_X d\tilde{p}_{\theta}<\infty \right\}, \\
c_\theta&:= \left(\int_X d\tilde{p}_\theta\right)^{-1}\quad (\theta \in \Theta). 
\end{align*}
Here we denote the dual space of $V$ by $V^\vee$. 
From the conditions that $v_0$ is $H$-fixed and $\tau|_H=0$, 
the notations $\rho(x) v_0$ and $\tau(x)$ are well-defined. 
Then we get a probability measure $p_\theta$ on $X$ as follows:
\[ p_\theta :=c_\theta \tilde{p}_\theta\ (\theta \in \Theta).  \]
As a result, we obtain the following family of distributions on $X$:
\[ \mathcal{P}:=\{ p_\theta \}_{\theta \in \Theta}. \]

\begin{remark}\label{remark:choice_of_nu}
This family $\mathcal{P}$ does not depend on the choice of $\nu$. 
In fact, for another nonzero relatively $G$-invariant measure $\nu'$ on $G/H$, 
there exist $c\in \R_{>0}$ and $\tau\in W_0(G,H)$ such that $d\nu'(x)=c e^{\tau(x)}d\nu(x)$ by the next fact. 
\end{remark}

\begin{fact}[See {\cite[Corollary~4.27]{ty21}} for example]\label{fact:relatively_G_invarinat_measures}
Assume that there exist nonzero relatively $G$-invariant measures on $G/H$, and take one of them, say $\nu_0$. 
Then, 
the set of all nonzero relatively $G$-invariant measures on $G/H$ is given as follows:
\begin{align*}
\{c e^\tau \nu_0 \ |\ c\in \R_{>0}, \tau \in W_0(G,H) \}. 
\end{align*}
\end{fact}

\begin{comment}
\begin{remark}\label{rem:choice_of_rel_inv}
The above family $\mathcal{P}$ does not depend on the choice of a nonzero relatively $G$-invariant measure $\mu$ on $X$. 
In fact, for nonzero relatively $G$-invariant measures $\mu_1, \mu_2$ on $X$, there exists $c\in \R_{>0}$ such that $c\frac{d\mu_1}{d\mu_2}\in \Omega_0(G,H)$. 
This follows from Fact~\ref{fact:multiplier} below. 
\end{remark}
\end{comment}

%\begin{proof}
%Take two relatively invariant measures $\mu_1$, $\mu_2$ on $X$ with multiplier $f_1$, $f_2$ respectively. 
%It is enough to show that there exist $\chi \in\Omega_0(G,H)$ and a positive constant $c$ such that $\mu_2=c \chi \mu_1$. 
%From Fact~\ref{fact:multiplier}, we have $f_2/f_1\in \Omega_0(G,H)$. 
%Then $(f_2/f_1)\mu_1$ is a relatively invariant measure on $X$ with multiplier $f_2$. 
%Therefore from Fact~\ref{fact:multiplier}, there exists a positive constant $c$ such that $\mu_2=c(f_2/f_1)\mu_1$. 
%\end{proof}
\begin{comment}
\begin{fact}[{\cite[Corollary~4.1]{Barndorff-Nielsen89}}]\label{fact:multiplier}
Suppose $G$ acts transitively on $X$. 
Then there exists a relatively $G$-invariant measure $\mu$ on $X$ with multiplier $f:G\to \R_{>0}$ if and only if 
\begin{align}
f(h)=\Delta_H(h)/\Delta_G(h)\quad (h\in H). 
\end{align}
Here we denote modular functions of $G$ and $H$ by $\Delta_G$ and $\Delta_H$, respectively. 
This measure is unique up to a multiplicative constant. 
\end{fact}
\end{comment}

\subsection{Exponential family}\label{subsec:pf_expf}
%\subsection{Definition of exponential family}
In this subsection, we see that families obtained by our method are exponential families. 
There are several definitions of an exponential family. 
We adopt the following Definition~\ref{def:expf} and show that families of distributions obtained by our method are exponential families in this sense. 

Let $X$ be a manifold %locally compact Hausdorff space and 
and $\mathcal{R}(X)$ the set of all Radon measures on $X$. 

\begin{definition}[exponential family, see Section~5 in \cite{Barndorff-Nielsen70} for example]\label{def:expf}
Let $\mathcal{P}$ be a non-empty subset of $\mathcal{R}(X)$ consisting of probability measures. 
$\mathcal{P}$ is called an \emph{exponential family} on $X$ if 
there exists a triple $(\mu, V, T)$ satisfying the following four conditions:
\begin{enumerate}
\item $\mu\in \mathcal{R}(X)$,
\item $V$ is a finite dimensional real vector space, 
\item $T\colon X\to V$ is a continuous map, 
\item for any $p\in \mathcal{P}$, there exists $\theta\in V^\vee$ such that 
\[ dp(x)=c_\theta e^{-\pairing{\theta}{T(x)}}d\mu(x), \]
where $V^\vee$ is the dual space of $V$, and 
\[ c_\theta:=\left(\int_{x\in X} e^{-\pairing{\theta}{T(x)}}d\mu(x)\right)^{-1}. \]
\end{enumerate}
\end{definition}

\begin{proposition}\label{prop:ours_are_expf}
A family $\mathcal{P}:=\{ p_\theta\}_{\theta\in \Theta}$ obtained by the method in Section~\ref{subsec:our_method} is an exponential family on $X$ if $\Theta$ is not empty. 
\end{proposition}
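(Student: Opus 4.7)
The plan is to construct, from the data $(G, H, V, v_0)$ together with the chosen relatively invariant measure $\mu$ on $X = G/H$, a triple $(\mu, W, T)$ satisfying the four conditions of Definition~\ref{def:expf}. The Radon measure $\mu$ is already at hand, so the real task is to find a finite-dimensional real vector space $W$ and a continuous ``sufficient statistic'' $T : X \to W$ that packages the two-parameter dependence $\theta = (\varphi, \chi)$ into a single covector $\theta' \in W^\vee$.

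First, I would linearise the multiplicative parameter $\chi$ by passing to logarithms. Set
\[ W_1 := \{ f : G \to \R \mid f \text{ is a continuous group homomorphism with } f|_H = 0 \}. \]
Pointwise exponentiation gives a bijection $W_1 \to \Omega_0(G,H)$, $f \mapsto e^f$. To show $W_1$ is finite dimensional, I would invoke the fact that any continuous homomorphism from a Lie group to $\R$ is automatically smooth and, at least on the identity component, is determined by its differential $df : \Lie(G) \to \R$, which factors through the abelianisation. Hence (under the mild assumption that $\pi_0(G)$ contributes only finitely many independent additive characters, which is implicit in all examples considered) $W_1$ embeds into $(\Lie(G)/[\Lie(G),\Lie(G)])^\vee$, and so is finite dimensional.

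Next, set $W := V \oplus W_1^\vee$, so that $W^\vee = V^\vee \oplus W_1$, and define
\[ T : X \to W, \qquad T_x := (x \cdot v_0,\ \ev_x), \]
where $\ev_x \in W_1^\vee$ is the evaluation functional $\ev_x(f) := f(x)$. Both components descend to $X = G/H$ because $v_0$ is $H$-fixed and each $f \in W_1$ vanishes on $H$; continuity of $T$ follows from the continuity of the representation $G \to GL(V)$ and from evaluating against a chosen basis of the finite-dimensional space $W_1$. For $\theta = (\varphi, \chi) \in \Theta$, put $\theta' := (\varphi, -\log\chi) \in V^\vee \oplus W_1 = W^\vee$. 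A direct pairing computation yields
\[ \exp(-\pairing{\theta'}{T_x}) = \exp(-\pairing{\varphi}{x \cdot v_0})\,\chi(x), \]
so that $dp_\theta = c_\theta \exp(-\pairing{\theta'}{T_\cdot})\,d\mu$; the normalising constant of Definition~\ref{def:expf} automatically coincides with the one supplied by the construction, verifying condition (iv).

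The main obstacle is the first step: ensuring that $\Omega_0(G,H)$, a priori an infinite set of functions on $G$, can in fact be parametrised by a finite-dimensional vector space. Once this finite-dimensionality of $W_1$ is secured by the Lie-algebraic argument sketched above, the rest is bookkeeping: the exponent $-\pairing{\varphi}{x \cdot v_0} + \log\chi(x)$ is recognised as the single linear pairing $-\pairing{\theta'}{T_x}$ on the finite-dimensional space $W$, and Definition~\ref{def:expf} is satisfied by $(\mu, W, T)$.
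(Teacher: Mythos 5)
Your proposal is correct and follows essentially the same route as the paper: the paper also takes $W=\{\log\chi \mid \chi\in\Omega_0(G,H)\}$, invokes a stated fact that the continuous additive characters of a Lie group (with finitely many components) form a finite-dimensional space, sets $\tilde V := V\oplus W^\vee$, and uses $T(x)=(x\cdot v_0,\ev_x)$. The only difference is cosmetic: you sketch the Lie-algebraic proof of finite-dimensionality (with the same implicit hypothesis on $\pi_0(G)$ that the paper's cited fact carries) and write out the pairing computation that the paper leaves to the reader.
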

To show this proposition, we need: 
\begin{fact}[{\cite[Proposition~3.3]{ty21}}]%[\TODO referenceをつける]
\label{fact:fin_dim}
Let $G$ be a Lie group with finitely many connected components. 
Then the following set is a finite dimensional vector subspace of $C(G)$:
\[ \{ f\in C(G)\ |\ f\colon G\to \R \text{ is a group homomorphism} \}. \]
Here, $C(G)$ denotes the set of all $\R$-valued continuous functions on $G$. 
\end{fact}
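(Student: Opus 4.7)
The plan is to construct an explicit injective linear map from the space in question into the finite-dimensional dual $\mathfrak{g}^\vee$ of $\Lie G$, which will immediately force finite-dimensionality.

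For each continuous homomorphism $f: G \to \R$ and each $X \in \mathfrak{g}$, the map $t \mapsto f(\exp(tX))$ is a continuous homomorphism $(\R,+) \to (\R,+)$, which by the standard solution of Cauchy's functional equation must be linear in $t$. Writing $f(\exp(tX)) = t\cdot \lambda_f(X)$ defines a function $\lambda_f: \mathfrak{g} \to \R$, and I would show this is $\R$-linear. Homogeneity $\lambda_f(cX)=c\lambda_f(X)$ comes from comparing the two parametrizations $f(\exp(t(cX)))=t\,\lambda_f(cX)$ and $f(\exp((tc)X))=(tc)\,\lambda_f(X)$. Additivity is the substantive part: using the Trotter product formula
\[ \exp(t(X+Y)) = \lim_{n\to\infty}\bigl(\exp(tX/n)\exp(tY/n)\bigr)^{n}, \]
together with continuity of $f$ and its homomorphism property, I would compute
\[ f\bigl(\exp(t(X+Y))\bigr) = \lim_{n\to\infty} n\bigl(f(\exp(tX/n)) + f(\exp(tY/n))\bigr) = t\bigl(\lambda_f(X)+\lambda_f(Y)\bigr), \]
which yields $\lambda_f(X+Y) = \lambda_f(X)+\lambda_f(Y)$. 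Hence the assignment $f \mapsto \lambda_f$ is a well-defined $\R$-linear map into $\mathfrak{g}^\vee$.

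Next I would prove injectivity. If $\lambda_f = 0$, then $f$ vanishes on $\exp(\mathfrak{g})$; since the identity component $G^{0}$ is generated as a group by $\exp(\mathfrak{g})$ and $f$ is a homomorphism, this already forces $f|_{G^{0}} \equiv 0$. For an arbitrary $g \in G$, the hypothesis that $G/G^{0}$ is finite (of some order $N\geq 1$) gives $g^{N}\in G^{0}$, so $N\,f(g) = f(g^{N}) = 0$ and hence $f(g)=0$. Combined with the linear embedding $f\mapsto\lambda_f$, this realizes the space of continuous real homomorphisms as a subspace of the $(\dim G)$-dimensional vector space $\mathfrak{g}^\vee$, which is the desired conclusion.

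The main obstacle is the additivity of $\lambda_f$: the Trotter step requires using the continuity of $f$ to pass the limit through $f$, together with a careful count showing that raising to the $n$-th power inside produces the factor of $n$ outside after applying the homomorphism property. Everything else---the solution of Cauchy's equation for continuous functions, the generation of $G^{0}$ by $\exp(\mathfrak{g})$, and the reduction from $G$ to $G^{0}$ via the finite-component hypothesis---is routine.
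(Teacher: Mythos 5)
Your proof is correct, and in fact there is nothing in the paper to compare it against: the statement is labelled a \emph{Fact} and the paper gives neither a proof nor a reference for it, passing directly to its use in the proof of Proposition~\ref{prop:ours_are_expf}. Your argument fills this gap cleanly. The three ingredients all check out: the map $t\mapsto f(\exp(tX))$ is a continuous endomorphism of $(\R,+)$ because $t\mapsto\exp(tX)$ is a one-parameter subgroup, so Cauchy's functional equation gives $\lambda_f(X)$; in the Trotter step the sequence is actually \emph{constant} in $n$, since
\[ f\bigl((\exp(tX/n)\exp(tY/n))^{n}\bigr)=n\bigl(\tfrac{t}{n}\lambda_f(X)+\tfrac{t}{n}\lambda_f(Y)\bigr)=t\bigl(\lambda_f(X)+\lambda_f(Y)\bigr), \]
so continuity of $f$ is needed only to pass $f$ through the limit defining $\exp(t(X+Y))$; and you invoke the finite-component hypothesis exactly where it is indispensable, in the injectivity step, which amounts to the observation that $\Hom(G/G^{0},\R)=0$ for a finite quotient because $\R$ is torsion-free (the hypothesis cannot be dropped: for a discrete group such as an infinite free abelian group the space of homomorphisms to $\R$ is infinite dimensional, so the bound $\dim\le\dim\mathfrak{g}$ genuinely fails without it). A shorter standard route would be to cite the theorem that every continuous homomorphism between Lie groups is automatically smooth and set $\lambda_f=(df)_e$, whose linearity is then free; your use of the Lie--Trotter product formula trades that piece of machinery for an elementary computation, which is a reasonable exchange. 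One could also quotient by the closure of $[G,G]$ first, but your version is already complete as written.
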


\begin{proof}[Proof of Proposition~\ref{prop:ours_are_expf}]
From Fact~\ref{fact:fin_dim}, 
$W_0(G,H)$ is a finite dimensional real vector space. 
Therefore $\tilde{V}:=V\oplus W_0(G,H)^\vee$ is also a finite dimensional vector space. 
Then we define $T\colon X\to \tilde{V}$ by 
\begin{align*}
T(x):=(\rho(x)v_0, \ev_x), 
\end{align*}
where $\ev_x \colon W_0(G,H)\to \R$ is the evaluation map at $x\in X$. 
We regard $\tilde{V}$ as $V$ of the condition (ii) in Definition~\ref{def:expf}. 
Then, the other conditions in Definition~\ref{def:expf} are satisfied. 
\end{proof}

\subsection{$G$-invariance}
In this subsection, we see that families obtained by our method are $G$-invariant. 
We put $X:=G/H$, then we have a natural $G$-action on $\mathcal{R}(X)$ induced by pushforward. 
%Here, we use the terminology, $G$-invariance when a set of measures are closed under the $G$-action:
\begin{definition}
A family $\mathcal{P}\subset \mathcal{R}(X)$ is said to be $G$-invariant if 
\[ g\cdot p \in \mathcal{P} \text{ for any }p\in \mathcal{P} \text{ and }g\in G. \]
\end{definition}

\begin{proposition}
The family $\mathcal{P}:=\{ p_\theta\}_{\theta\in \Theta}$ obtained by the method in Section~\ref{subsec:our_method} is $G$-invariant. 
Moreover, the $G$-action on $\Theta$ is given by 
\begin{align*}
G\times \Theta\to \Theta, (g, (\varphi, \tau))\mapsto (\rho^\vee(g)\varphi, \tau). 
\end{align*}
Here, $\rho^\vee$ is the contragredient representation of $\rho$. 
\end{proposition}

\begin{proof}
Take any $\theta:=(\varphi, \tau)\in \Theta$ and $g\in G$. 
Put $\theta':=(\rho^\vee(g)\varphi, \tau)\in V^\vee \times W_0(G,H)$. 
We show that $\theta'\in \Theta$ and $g\cdot p_\theta=p_{\theta'}$. 
%We show that $g\cdot p_\theta$ is equal to $\tilde{p}_{\theta'}$ up to constant as follows: 
We have 
\begin{align*}
d(g\cdot p_\theta)(x)&=c_\theta\exp(-\pairing{ \varphi }{\rho (g^{-1}x)v_0}+\tau (g^{-1}x))d(g\cdot \nu)(x)\\
&=c_\theta \exp(-\pairing{ \varphi}{\rho(g^{-1})\rho(x) v_0}+\tau(g^{-1})+\tau(x)) \chi(g)^{-1}d\nu(x)\\
&=c_\theta \exp(-\tau(g))\chi(g)^{-1}\exp(-\pairing{\rho^\vee(g)\varphi}{\rho(x)v_0}+\tau(x))d\nu(x)\\
&=c_\theta \exp(-\tau(g))\chi(g)^{-1} d\tilde{p}_{\theta'}(x). 
\end{align*}
Here, $\chi\colon G\to \R_{>0}$ is the multiplier of $\nu$, namely, 
it satisfies $d(g\cdot \nu)(x)=\chi(g)^{-1}d\nu(x)$ for any $g\in G$. 
From the equality above, $\theta'\in \Theta$ holds. 
One should check that
\[c_\theta\exp(-\tau(g))\chi(g)^{-1}=c_{\theta'}, \]
but in fact it is not necessary. It is enough to observe that $p_\theta$ and $p_{\theta'}$ are probability measures.
Thus, $\mathcal{P}$ is $G$-invariant. 
%The term $c_\theta \exp(-\tau(g))\chi(g)^{-1}$ is constant on $X$. 
%Therefore, we have $\theta' \in \Theta$ and since $g\cdot p_\theta$ and $p_{\theta'}$ are probability measure on $X$, we obtain $g\cdot p_\theta=p_{\theta'}$. 
\end{proof}

\section{Examples}\label{sec:examples}
In this section, we realize widely used exponential families in Table~\ref{tab:1} by our method. 

\begin{table}[h]
\caption{Examples and inputs $(G,H,V,v_0)$ for them}
\small
\begin{tabular}{c|c|c|c|c|c}\label{tab:1}
distribution & sample space & $G$ & $H$ & $V$ & $v_0$ \\
\hline
Bernoulli & $\{\pm 1\}$ & $\{\pm 1\}$ & $\{1\}$& $\R_{sgn}$ & 1 \\
Categorical &$\{1,\cdots,n\}$ &$\mathfrak{S}_n$ & $\mathfrak{S}_{n-1}$ & $W$ & $w$\\
Normal & $\R$ & $\R^\times \ltimes \R$ & $\R^\times $ & $\Sym(2,\R)$ & $E_{22}$ \\
Multivariate normal & $\R^n$ &$GL(n,\R)\ltimes \R^n$ & $GL(n,\R)$ & $\Sym(n+1,\R)$ & $E_{n+1,n+1}$\\
Gamma  & $\R_{>0}$ & $\R_{>0}$ & $\{1\}$ & $\R_1$ & 1\\
Inverse gamma & $\R_{>0}$ & $\R_{>0}$ &$\{1\}$ & $\R_{-1}$ & 1\\
Wishart & $\Sym^+(n,\R)$ & $GL(n,\R)$ & $O(n)$ & $\Sym(n,\R)$ & $I_n$ \\
Von Mises & $S^1$ & $SO(2)$ & $\{I_2\}$ & $\R^2$ & $e_1$\\
Von Mises--Fisher & $S^{n-1}$ & $SO(n)$ & $SO(n-1)$ & $\R^n$ & $e_1$\\
Fisher--Bingham & $S^{n-1}$ & $SO(n)$ & $SO(n-1)$ & $\R^n\oplus \Sym(n,\R)$ & $e_1\oplus E_{11}$ \\
Hyperboloid & $H^n$ & $SO_0(1,n)$ & $SO(n)$ & $\R^{n+1}$ & $e_0$\\
Poincar\'e  & $\mathcal{H}$ & $SL(2,\R)$ & $SO(2)$ & $\Sym(2,\R)$ & $I_2$
\vspace{2mm}
\end{tabular}
Here $W:=\{(x_1,\cdots, x_n)\in \R^n\ |\ \sum_{i=1}^n x_i=0\}$, 
$w:=(-(n-1),1,\cdots, 1)\in W$. 
\end{table}

We use Fact~\ref{prop:triv_omega} and Remark~\ref{rem:factor} below to determine $W_0(G,H)$ for pairs $(G,H)$ in the following subsections. 

\begin{fact}[{\cite[Proposition~4.28]{ty21}}]\label{prop:triv_omega}
Let $G$ be a Lie group with finitely many connected components and $H$ a closed subgroup of $G$. 
We denote by $\frak{g}$ and $\frak{h}$ the Lie algebras of $G$ and $H$, respectively. 
Then we have $W_0(G,H)=\{0\}$ if the pair $(G,H)$ satisfies one of the following conditions:
\begin{enumerate}
\item[(i)] $\frak{g}=\frak{h}+[\frak{g},\frak{g}]$, 
\item[(ii)] $G$ is compact, 
\item[(iii)] $G$ is semisimple. 
\end{enumerate}
\end{fact}

\begin{remark}\label{rem:factor}
Let $G$ be a group and 
$\tau\colon G\to \R$ a group homomorphism. 
Then $\tau$ factors $G/[G,G]$. 
\end{remark}

\begin{comment}
\begin{fact}\label{fact:compact}
If $G$ is a compact Lie group, 
then we have $\Omega=\{1\}$. 
\end{fact}
\begin{fact}
If $G$ is a connected semisimple Lie group, 
then we have $\Omega=\{1\}$. 
\end{fact}

\begin{fact}\label{fact:semisimple_Omega}
If $G$ is a connected semisimple Lie group, 
then $[G,G]=G$ holds. 
\end{fact}
\end{comment}

\subsection{Normal and multivariate normal distributions}
In this subsection, we realize the families of normal distributions on $\R$ and multivariate normal distributions on $\R^n$ by using a representation of an affine transformation group. 
\begin{proposition}
Put $G=\R^\times \ltimes \R$ and $H=\R^\times \subset G$. 
Let $\rho\colon G\to GL(\Sym(2,\R))$ be a representation given as follows:
\[ \rho(g)(S):=\begin{pmatrix}a & b\\ 0 & 1 \end{pmatrix}S{\vphantom{\begin{pmatrix}a & b\\ 0 &1\end{pmatrix}}}^{\ t}\!\!\begin{pmatrix}a & b\\ 0 & 1 \end{pmatrix}\quad (g=(a, b)\in \R^\times\ltimes \R, S\in\Sym(2,\R)). \]
Then $v_0:=\begin{pmatrix}0 & 0\\ 0 & 1\end{pmatrix}\in \Sym(2,\R)$ is $H$-fixed, 
and by our method we obtain the family of normal distributions on $\R$, 
\begin{align} \left\{\tfrac{1}{\sqrt{2\pi\sigma^2}}\exp\left(-\tfrac{(x-\mu)^2}{2\sigma^2}\right)dx  \right\}_{(\sigma, \mu)\in \R_{>0}\times \R}. \label{fami:normal_dist}\end{align}
\end{proposition}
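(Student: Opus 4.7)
The plan is to execute the recipe of Section~\ref{subsec:our_method} step by step, identify all the ingredients explicitly in this case, and match the resulting family with~(\ref{fami:normal_dist}).

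First I would verify the $H$-fixedness: for $h = (a, 0) \in H$, the diagonal factor $\begin{pmatrix} a & 0 \\ 0 & 1 \end{pmatrix}$ acts trivially on $v_0 = E_{22}$ from both sides. Next I would identify $X = G/H$ with $\R$ via $(a, b)H \mapsto b$ (well-defined since $(a, b)(a', 0) = (aa', b)$); under this identification the left $G$-action on $G/H$ becomes the affine action $(a, b) \cdot x = ax + b$ on $\R$. A short matrix computation with the coset representative $(1, x)$ then yields
\[ x \cdot v_0 = \begin{pmatrix} x^2 & x \\ x & 1 \end{pmatrix}. \]

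The next ingredient is $\Omega_0(G, H)$. Since $\R_{>0}$ is abelian, every $\chi \in \Omega_0(G, H)$ factors through the abelianization $G^{\mathrm{ab}}$. A brief commutator computation gives $[G, G] = \R$ (the translation subgroup), so $G^{\mathrm{ab}} \cong \R^\times$, and every continuous homomorphism $\R^\times \to \R_{>0}$ has the form $a \mapsto |a|^s$. The condition $\chi|_H = 1$ (with $H = \R^\times$) forces $s = 0$, hence $\Omega_0(G, H) = \{1\}$. For the relatively invariant measure, Fact~\ref{fact:multiplier} together with $\Delta_G(a, b) = |a|^{-1}$ and $\Delta_H \equiv 1$ singles out (up to a positive constant) the Lebesgue measure $dx$ on $\R$, with multiplier $|a|$.

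Finally, identifying $\Sym(2, \R)^\vee$ with $\Sym(2, \R)$ via the natural pairing $\pairing{A}{B} = a_{11}b_{11} + 2 a_{12} b_{12} + a_{22}b_{22}$ and writing the parameter $\varphi$ with entries $\alpha, \beta, \gamma$, the density becomes
\[ d\tilde{p}_\theta(x) = \exp(-\alpha x^2 - 2\beta x - \gamma)\, dx. \]
Integrability on $\R$ is equivalent to $\alpha > 0$; the constant $\gamma$ is absorbed into $c_\theta$; and the substitution $\sigma^2 = 1/(2\alpha)$, $\mu = -\beta/\alpha$ turns $dp_\theta$ into the normal density with mean $\mu$ and variance $\sigma^2$. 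As $(\alpha, \beta)$ sweeps $\R_{>0} \times \R$, the pair $(\sigma^2, \mu)$ sweeps $\R_{>0} \times \R$, recovering exactly the family~(\ref{fami:normal_dist}). No step poses a genuine obstacle; the only items requiring care are tracking the factor of $2$ in the pairing on $\Sym(2, \R)$ so that $\beta$ correctly encodes the shift parameter, and verifying surjectivity of the change of variables onto $\R_{>0} \times \R$.
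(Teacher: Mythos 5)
Your proposal is correct and follows essentially the same route as the paper's proof: the same identification $G/H\simeq\R$, the same use of $[G,G]=\R$ to get $\Omega_0(G,H)=\{1\}$, the same trace pairing on $\Sym(2,\R)$ giving $\exp(-(\theta_1x^2+2\theta_2x+\theta_3))dx$, and the same change of variables $\mu=-\theta_2/\theta_1$, $\sigma^2=1/(2\theta_1)$. You merely spell out a few steps the paper leaves implicit (the $H$-fixedness check, the computation of $x\cdot v_0$, and the appeal to Fact~\ref{fact:multiplier} for the relatively invariant measure), which is harmless.
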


\begin{proof}
We identify $G/H=(\R^\times \ltimes \R)/\R^\times$ with $\R$ by the following homeomorphism:
\[ G/H\to \R,\ gH\mapsto b\quad \left(g=(a,b)\in G \right). \]

%By the definition $H=\{ \begin{pmatrix}a & 0 \\ 0 &1\end{pmatrix} | a\in \R_{>0}\}$, 
%It is clear that $v_0$ is $H$-fixed. 
%Put $g_1:=\begin{pmatrix}1 & 1 \\ 0 & 1 \end{pmatrix}$, $g_{11}:=\begin{pmatrix} 1 & 2\\ 0 &1\end{pmatrix}\in G$. 
%\begin{claim}
%$\spanvec\{ \rho(g_1) v_0, \rho(g_{11})v_0, v_0 \}=\Sym(2,\R)$. 
%\end{claim}
%This claim follows from a direct calculation. 
%Therefore $v_0$ is a cyclic vector. 
We have $W_0(G,H)=\{ 0\}$ from Fact~\ref{prop:triv_omega} (i). 
In fact, $\frak{g}=\R\oplus \R$, $\frak{h}=\R\oplus \{0\}$ and $[\frak{g},\frak{g}]=\{0\}\oplus \R$. 
We identify $\Sym(2,\R)^\vee$ with $\Sym(2,\R)$ by taking the following inner product on $\Sym(2,\R)$: 
\[ \pairing{x}{y}:=\Tr(xy) \quad (x,y\in \Sym(2,\R)). \]
Let $dx$ denote the Lebesgue measure on $\R$, which is relatively $G$-invariant. 
Then we have 
\begin{align*} 
d\tilde{p}_{\theta_1, \theta_2, \theta_3}(x)
&= \exp\left(-\Tr\left(\begin{pmatrix} \theta_1 & \theta_2 \\ \theta_2 & \theta_3\end{pmatrix}\rho(g)v_0\right)\right)dx\\
&=\exp(-(\theta_1 x^2+2\theta_2 x +\theta_3))dx\quad (g=(a, x)\in \R^\times \ltimes \R ). 
\end{align*}
The following equalities are well-known:
\begin{align*}
\Theta&=\left\{\begin{pmatrix}\theta_1&\theta_2\\ \theta_2&\theta_3\end{pmatrix}\in \Sym(2,\R)\ \Big| \int_\R d\tilde{p}_{\theta_1,\theta_2,\theta_3}<\infty \right\}\\
&=\left\{\begin{pmatrix}\theta_1&\theta_2\\ \theta_2&\theta_3\end{pmatrix}\in \Sym(2,\R)\ \Big|\ \theta_1>0\right\},\\
c_\theta&=\sqrt{\frac{\theta_1}{\pi}}\exp\left(\frac{\theta_1\theta_3-\theta_2^2}{\theta_1}\right). 
\end{align*}
%It is well known that $\Theta=\{\begin{pmatrix}\theta_1&\theta_2\\ \theta_2&\theta_3\end{pmatrix}\in \Sym(2,\R)\ | \int_\R d\tilde{p}_{\theta_1,\theta_2,\theta_3}<\infty \}=\{\begin{pmatrix}\theta_1&\theta_2\\ \theta_2&\theta_3\end{pmatrix}\in \Sym(2,\R)\ |\ \theta_1>0\}$. 
Then we obtain the family of normal distributions (\ref{fami:normal_dist}) by a change of variables $\mu=-\frac{\theta_2}{\theta_1}$ and $\sigma=\frac{1}{\sqrt{2\theta_1}}$. 
%\[ dp_{\theta_1,\theta_2,\theta_3}(x)=\sqrt{\tfrac{\theta_1}{\pi}}\exp(-\theta_1(x+\tfrac{\theta_2}{\theta_1})^2)dx. \]
\end{proof}

By generalizing the case above, we obtain the following:
\begin{proposition}
Let $n$ be a positive integer. 
Put $G=GL(n,\R)\ltimes \R^n$, $H=GL(n,\R)\subset G$. 
Let $\rho\colon G\to GL(\Sym(n+1,\R))$ be a representation given as follows:
\[ \rho(g)(S):=\begin{pmatrix}A & b\\ 0 &1 \end{pmatrix} S {\vphantom{\begin{pmatrix}a & b\\ 0 &1\end{pmatrix}}}^{\ t}\!\!\begin{pmatrix}A & b\\ 0 &1 \end{pmatrix}\quad (g=(A, b)\in GL(n,\R)\ltimes\R^n, S\in\Sym(n+1,\R)). \]
Then $v_0:=E_{n+1, n+1}\in \Sym(n+1,\R)$ is $H$-fixed, and 
by our method we obtain the family of multivariate normal distributions on $\R^n$, 
\begin{align}
\left\{ \frac{1}{\sqrt{(2\pi)^n \det\Sigma}}\exp\left(-\tfrac{1}{2}\trans(x-\mu)\Sigma^{-1}(x-\mu)\right)dx\right\}_{(\Sigma, \mu)\in \Sym^+(n,\R)\times \R^n} \label{fami:mult_normal}, 
\end{align}
where we denotes the set of all positive definite symmetric matrices of size $n$ by $\Sym^+(n,\R)$. 
%\begin{align}\left\{\sqrt{\frac{\det \theta_1}{\pi^n}}\exp(-\trans (x+\theta_1^{-1}\theta_2)\theta_1(x+\theta_1^{-1}\theta_2))dx \right\}_{(\theta_1,\theta_2)\in \Sym^+(n,\R)\times \R^n} \end{align}
\end{proposition}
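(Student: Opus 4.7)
The plan is to follow the one-dimensional case essentially verbatim, replacing scalars by blocks. First I would identify $G/H = (GL(n,\R)\ltimes \R^n)/GL(n,\R)$ with $\R^n$ via $gH\mapsto b$ for $g=(A,b)$, and check that $v_0 = E_{n+1,n+1}$ is $H$-fixed by the block identity $\rho((A,0))\,v_0 = \mathrm{diag}(A,1)\,E_{n+1,n+1}\,\mathrm{diag}(\trans A,1) = E_{n+1,n+1}$.

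Next I would verify $\Omega_0(G,H) = \{1\}$. Mirroring the $n=1$ case, one applies Proposition~\ref{prop:triv_omega}~(iii) after checking $[G,G]\cdot H = G$: the identity $[(A,0),(I,b)] = (I,(A-I)b)$ shows that the commutator subgroup contains the full translation subgroup $\R^n$, and combined with $[GL(n,\R),GL(n,\R)] = SL(n,\R)$ this gives $[G,G]\supseteq SL(n,\R)\ltimes\R^n$, hence $[G,G]\cdot H = G$. Equivalently (and more hands-on), any continuous $\chi:G\to\R_{>0}$ with $\chi|_H=1$ restricts on translations to $\chi(I,b) = e^{\pairing{\xi}{b}}$ for some $\xi\in\R^n$; since $\chi$ is a homomorphism, conjugation by $(A,0)\in H$ gives $\chi(I,Ab) = \chi(I,b)$ for all $A\in GL(n,\R)$, forcing $\xi=0$.

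After identifying $\Sym(n+1,\R)^\vee$ with $\Sym(n+1,\R)$ via the trace pairing and taking Lebesgue measure $dx$ on $\R^n$ (which is $G$-relatively invariant), the key computation is
\[ \rho((A,x))\,v_0 = \begin{pmatrix} x\,\trans x & x \\ \trans x & 1 \end{pmatrix}, \]
independent of $A$ (as $v_0$ is $H$-fixed). Writing a general covector as $\theta = \begin{pmatrix}\Theta_1 & \theta_2 \\ \trans\theta_2 & \theta_3\end{pmatrix}$ with $\Theta_1\in\Sym(n,\R)$, $\theta_2\in\R^n$, $\theta_3\in\R$, the pairing gives $\pairing{\theta}{\rho((A,x))v_0} = \trans x\,\Theta_1\,x + 2\,\trans\theta_2\,x + \theta_3$, so
\[ d\tilde p_\theta(x) = \exp\bigl(-(\trans x\,\Theta_1\,x + 2\,\trans\theta_2\,x + \theta_3)\bigr)\,dx, \]
which is integrable if and only if $\Theta_1$ is positive definite; this pins down $\Theta$.

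The last step is the standard $n$-dimensional Gaussian integral: completing the square at $m := -\Theta_1^{-1}\theta_2$ evaluates $\int d\tilde p_\theta$ and hence $c_\theta$ explicitly, and the reparametrization $\Sigma = \tfrac12\Theta_1^{-1}$, $\mu = -\Theta_1^{-1}\theta_2$ turns the normalized density into the form~(\ref{fami:mult_normal}). The only non-routine step is the triviality of $\Omega_0(G,H)$; the $H$-fixedness check, the block computation of $\rho((A,x))v_0$, and the multivariate Gaussian integral are all mechanical.
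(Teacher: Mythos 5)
Your proof is correct and follows essentially the same route as the paper: identify $G/H\simeq\R^n$, deduce $\Omega_0(G,H)=\{1\}$ from Proposition~\ref{prop:triv_omega}~(iii), compute $\rho((A,x))v_0$ against the trace pairing to get $\exp(-(\trans x\,\Theta_1 x+2\,\trans\theta_2 x+\theta_3))\,dx$, and finish with the Gaussian integral and the reparametrization $\Sigma=\tfrac{1}{2}\Theta_1^{-1}$, $\mu=-\Theta_1^{-1}\theta_2$. If anything, your treatment of the commutator step is more careful than the paper's, which asserts $[G,G]=\R^n$ (literally true only for $n=1$); as you note, one only needs $[G,G]\supseteq\R^n$ so that $[G,G]\cdot H=G$, i.e.\ $\pi(G)=\pi(H)$.
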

\begin{proof}
We identify $G/H=(GL(n,\R)\ltimes \R^n) /GL(n,\R)$ with $\R^n$ by the following map:
\[ G/H\to \R^n, (A,b)H\mapsto b. \]
%By the definition $H=\{ \begin{pmatrix}A & 0 \\ 0 &1\end{pmatrix}| A\in GL(n,\R)\}$, 
%It is clear that $v_0$ is $H$-fixed. 
\begin{comment}
Next we show $v_0$ is cyclic, which comes from the following: 
\begin{claim} Put $g_{ij}:=(I_n, e_i+e_j), g_{i}:=(I_n,e_i)\in G$ $(i,j=1,\cdots, n)$. Then we have
\[ \spanvec\{ g_{ij}v_0, g_{i}v_0, v_0 | i,j=1,\cdots, n\}=\Sym(n+1,\R). \]
\end{claim}
This claim comes from the following relation:
\begin{align}
g_{ij}v_0 -g_i v_0-g_jv_0&= \begin{pmatrix}S_{ij} & 0\\ 0 & 0 \end{pmatrix},\\
4g_i v_0-g_{ii}v_0-3v_0 &=2S_{i,n+1}. 
\end{align}
Here $S_{i,j}=E_{ij}+E_{ji}$. 
\end{comment}
We have $W_0(G,H)=\{0\}$ 
from Fact~\ref{prop:triv_omega} (i). 
In fact, $\frak{g}=\R^n\oplus \R^n$, $\frak{h}=\R^n\oplus \{0\}$ and $[\frak{g}, \frak{g}]=\{0\}\oplus \R^n$. 
%This claim follows from $\ker \chi \supset [G,G]=\R^n$ and $\chi|_H=1$ for $\chi\in \Omega_0(G,H)$. 
We identify $\Sym(n+1,\R)^\vee$ with $\Sym(n+1,\R)$ by taking the following inner product on $\Sym(n+1,\R)$: 
\[ \pairing{x}{y}=\Tr(xy) \quad (x,y\in \Sym(n+1,\R)). \]
Let $dx$ denote the Lebesgue measure on $\R^n$, which is relatively $G$-invariant. 
Then we have 
\begin{align*}
d\tilde{p}_{\theta_1,\theta_2,\theta_3}(x)
&=\exp\left(-\Tr\left(\begin{pmatrix}\theta_1 & \theta_2 \\ \trans \theta_2 & \theta_3 \end{pmatrix} \begin{pmatrix}x\trans x& x\\ \trans x & 1 \end{pmatrix}\right)\right) dx\\
&=\exp(-(\trans x \theta_1 x +2 \trans \theta_2 x+\theta_3))dx\ (\theta_1\in \Sym(n,\R), \theta_2\in \R^n, \theta_3\in \R). 
\end{align*}
The following equalities are well-known:
\begin{align*}
\Theta&=\left\{ (\theta_1,\theta_2,\theta_3)\in \Sym(n,\R)\times \R^n\times \R\ |\ \int_{\R^n}d\tilde{p}_{\theta_1,\theta_2,\theta_3}<\infty\right\}\\
&=\left\{ (\theta_1,\theta_2,\theta_3)\in \Sym(n,\R)\times \R^n\times \R\ |\ \theta_1 \text{ is positive definite. }\right\},\\
c_\theta&=\sqrt{\frac{\det \theta_1}{\pi^n}}\exp\left(\frac{\det\begin{pmatrix}\theta_1 & \theta_2\\ \trans \theta_2 & \theta_3\end{pmatrix}}{\det \theta_1}\right). 
\end{align*}
%It is well known that $\Theta=\{ (\theta_1,\theta_2,\theta_3)\in \Sym(n,\R)\times \R^n\times \R| \int_{\R^n}d\tilde{p}_{\theta_1,\theta_2,\theta_3}<\infty\}=\{ (\theta_1,\theta_2,\theta_3)\in \Sym(n,\R)\times \R^n\times \R\ |\ \theta_1 \text{ is positive definite. }\}$. 
Then we obtain the family of multivariate normal distributions (\ref{fami:mult_normal}) by a change of variables $\mu=-\theta_1^{-1} \theta_2\in \R^n$ and $\Sigma=\frac{1}{2}\theta_1^{-1}\in \Sym^+(n,\R)$. 
%\[ dp_{\theta_1,\theta_2,\theta_3}(x)= \sqrt{\frac{\det \theta_1}{\pi^n}}\exp(-\trans (x+\theta_1^{-1}\theta_2)\theta_1(x+\theta_1^{-1}\theta_2))dx. \]
\end{proof}

\subsection{Bernoulli and categorical distributions}
In this subsection, we realize the families of Bernoulli distributions on $\{\pm 1\}$ and categorical distributions on $\{1,\cdots, n\}$ %$, 
%which can be regarded as the generalization of Bernoulli distribution, 
by using the signature representation of $\{\pm 1\}$ and a natural representation of the symmetric group, respectively. 
\begin{proposition}
Put $G=\{\pm 1\}$ and $H=\{1\}$. 
Let $\rho\colon \{\pm 1\}\to \R^\times$ be the signature representation. 
Then $v_0:=1$ is $H$-fixed, and by our method, we obtain the family of Bernoulli distributions on $\{ \pm 1\}$, 
\begin{align}
\left\{ s^{\frac{1+x}{2}}(1-s)^{\frac{1-x}{2}}dc(x) \right\}_{s\in (0,1)}\quad (x\in \{ -1,1\}),  \label{fami:Bernoulli}
\end{align}
where $dc$ denotes the counting measure on $\{\pm 1\}$. 
\end{proposition}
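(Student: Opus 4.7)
The plan is to run the general construction of Section~\ref{subsec:our_method} on the inputs $(G,H,V,v_0) = (\{\pm 1\},\{1\},\R_{sgn},1)$ and then reparametrise. First, the $H$-fixedness of $v_0$ is automatic since $H = \{1\}$ is trivial, and the sample space $X = G/H$ is identified with $\{\pm 1\}$ under the natural map $g\{1\}\mapsto g$. Through this identification, the action of $x\in\{\pm 1\}$ on $v_0=1$ via the signature representation yields $x\cdot v_0 = x$, so, identifying $V^\vee = \R^\vee$ with $\R$ via the standard pairing, the density takes the form
\[ d\tilde p_{\varphi,\chi}(x) = \exp(-\varphi x)\,\chi(x)\,d\mu(x) \qquad (x\in\{\pm 1\}). \]

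Next I would check that $\Omega_0(G,H) = \{1\}$: any continuous group homomorphism $\chi\colon\{\pm 1\}\to \R_{>0}$ must send the order-$2$ element $-1$ to an element whose order divides $2$ in the torsion-free group $\R_{>0}$, forcing $\chi\equiv 1$. Hence the $\chi$-parameter drops out and only $\varphi$ remains. As a relatively invariant measure $\mu$ on the two-point space $X$ I would take the counting measure (which is manifestly $G$-invariant, hence relatively invariant). The normalising integral is then
\[ \int_X d\tilde p_\varphi = e^{-\varphi} + e^{\varphi}, \]
which is finite for every $\varphi\in\R$, so $\Theta = \R$ and
\[ dp_\varphi(x) = \frac{e^{-\varphi x}}{e^{\varphi}+e^{-\varphi}} \qquad (x\in\{\pm 1\}). \]

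Finally I would perform the change of variables $s := e^{-\varphi}/(e^{\varphi}+e^{-\varphi})$, which is a smooth bijection $\R\to(0,1)$. Direct evaluation gives $dp_\varphi(1) = s$ and $dp_\varphi(-1) = 1-s$, and these two values are packaged uniformly as $s^{(1+x)/2}(1-s)^{(1-x)/2}$ for $x\in\{\pm 1\}$, recovering the family (\ref{fami:Bernoulli}). There is no real obstacle in this proof: the whole statement is a direct unwinding of the construction on a two-point sample space, and the only mildly non-trivial point is the verification that $\Omega_0(G,H)$ is trivial, so that the $\chi$-factor can be safely discarded.
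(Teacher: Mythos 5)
Your proof is correct and follows essentially the same route as the paper: identify $G/H$ with $\{\pm 1\}$, note $\Omega_0(G,H)=\{1\}$, compute $d\tilde p_\theta$ against the counting measure, normalise, and substitute $s=e^{-\theta}/(e^{\theta}+e^{-\theta})$. The only cosmetic difference is that you verify the triviality of $\Omega_0(G,H)$ directly via the torsion argument, while the paper cites its general Proposition~\ref{prop:triv_omega}~(i) for compact $G$; both are fine.
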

\begin{proof}
Since $G$ is compact, we have $W_0(G,H)=\{0\}$ from Fact~\ref{prop:triv_omega} (ii). %\ref{fact:compact}. 
Therefore we have 
\[d\tilde{p}_\theta(x)=e^{-\theta x}dc(x)=\begin{cases}e^{-\theta}\quad (x=1) \\ e^\theta\quad (x=-1)\end{cases},\] where $dc$ is the counting measure on $G/H=\{\pm 1\}$, which is (relatively) $G$-invariant. 
This is integrable for any $\theta\in \R$, so we have 
$\Theta=\R$ and $c_\theta=\frac{1}{e^{-\theta}+e^\theta}$. 
Therefore we get 
\[ dp_\theta(x)=\begin{cases}\frac{e^{-\theta}}{e^{-\theta}+e^\theta}dc(x)\quad (x=1) \\ \frac{e^{\theta}}{e^{-\theta}+e^\theta}dc(x)\quad (x=-1)\end{cases}. \]
Thus by a change of variable $s=\frac{e^{-\theta}}{e^{-\theta}+e^\theta}$, 
we obtain the family of Bernoulli distributions (\ref{fami:Bernoulli}) on $\{\pm 1\}$. 
\end{proof}

By generalizing the case above, we obtain the following:
\begin{proposition}
Let $n$ be a positive number with $n\geq 2$, $G$ the symmetric group $\frak{S}_n$ and $H$ the stabilizer $\frak{S}_{n-1}=(\frak{S}_n)_1$ of $1$ in $G$. 
Let $\rho\colon G\to GL(W)$ be a subrepresentation of the natural representation $G\to GL(n,\R)$, where $W=\{ (x_1,\cdots, x_n)\in \R^n\ |\ \sum_{i=1}^n x_i=0\}$. 
Then $v_0:=(-(n-1),1,\cdots, 1)\in W$ is $H$-fixed, 
and by our method we obtain the family of categorical distributions on $\{1,\cdots, n\}$, 
\begin{align}
\{ s_1^{\delta_{1,x}}s_2^{\delta_{2,x}}\cdots s_n^{\delta_{n,x}}dc(x) \}_{(s_1,\cdots,s_n)\in\{s_i>0, \sum_{i=1}^n s_i=1\}}\quad  (x\in \{1,2,\cdots, n\}), \label{fami:cate}
\end{align}
where dc denotes the counting measure on $\{1,\cdots, n\}$ and $\delta_{i,x}$ is the Kronecker delta. 
\end{proposition}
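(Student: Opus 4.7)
The plan is to follow the Bernoulli case, with the added bookkeeping that comes from representing functionals on the hyperplane $W$. First I would identify $G/H=\mathfrak{S}_n/\mathfrak{S}_{n-1}$ with $\{1,\dots,n\}$ via $gH\mapsto g(1)$ and take the counting measure on $\{1,\dots,n\}$ as the (relatively) $G$-invariant reference $\mu$. Since $\mathfrak{S}_n$ is finite, hence compact, Proposition~\ref{prop:triv_omega}~(i) yields $\Omega_0(G,H)=\{1\}$, so no multiplier $\chi$ appears and $\Theta\subseteq W^\vee$.

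Next I would make the orbit map explicit. The natural action $g\cdot e_i=e_{g(i)}$ gives
\[
g\cdot v_0 \;=\; -(n-1)\,e_{g(1)} + \sum_{j\neq g(1)} e_j.
\]
Writing $x=g(1)$ and representing $\varphi\in W^\vee$ by a tuple $(\varphi_1,\dots,\varphi_n)\in\R^n$, well-defined modulo $\R(1,\dots,1)$ (which annihilates $W$), I obtain
\[
\pairing{\varphi}{x\cdot v_0} \;=\; -n\,\varphi_x + \sum_{j=1}^n \varphi_j,
\]
so $d\tilde{p}_\varphi(x)=\exp\!\bigl(n\varphi_x-\sum_j\varphi_j\bigr)$ against the counting measure. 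Because $X$ is finite the integrability condition is vacuous, hence $\Theta=W^\vee$, and the normalizing constant $c_\varphi=e^{\sum_j\varphi_j}/\sum_y e^{n\varphi_y}$ produces
\[
dp_\varphi(x) \;=\; \frac{e^{n\varphi_x}}{\sum_{y=1}^n e^{n\varphi_y}}.
\]

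To finish, I would make the change of variables $s_x:=e^{n\varphi_x}/\sum_y e^{n\varphi_y}$, so that $dp_\varphi(x)=s_x=s_1^{\delta_{1,x}}\cdots s_n^{\delta_{n,x}}$, matching (\ref{fami:cate}); surjectivity onto the open simplex $\{s_i>0,\sum_i s_i=1\}$ is witnessed by the preimage $\varphi_x:=\tfrac{1}{n}\log s_x$. The main obstacle is purely notational: tracking the $\R(1,\dots,1)$ ambiguity in representing elements of $W^\vee$ as $n$-tuples. Because $g\cdot v_0\in W$ annihilates that direction, both the exponent in $d\tilde{p}_\varphi$ and the ratio defining $s_x$ are manifestly well-defined on $W^\vee$, and no analytic subtlety arises since the sample space is finite.
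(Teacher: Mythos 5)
Your proposal is correct and follows essentially the same route as the paper: trivial $\Omega_0(G,H)$ by compactness, the explicit orbit computation giving $e^{n\varphi_x}$ up to normalization, and the change of variables $s_x=e^{n\varphi_x}/\sum_y e^{n\varphi_y}$. The only cosmetic difference is that you represent $W^\vee$ by tuples modulo $\R(1,\dots,1)$ (and explicitly check surjectivity onto the simplex), whereas the paper identifies $W^\vee$ with $W$ via the restricted standard inner product, which makes the extra $\sum_j\varphi_j$ term vanish from the start.
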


\begin{proof}
%It is clear that $v_0$ is cyclic by the definition of the natural representation of $\frak{S}_n$. 
Since $H=\frak{S}_{n-1}=(\frak{S}_n)_1$ is the stabilizer of the first element, $v_0=(-(n-1),1,\cdots, 1)$ is $H$-fixed. 
We have $W_0(G,H)=\{0\}$ from Fact~\ref{prop:triv_omega} (ii). %\ref{fact:compact}. 
By taking an inner product on $W$ which is the restriction of the standard inner product on $\R^n$, we identify $W^\vee$ with $W$. 
Then we have 
\begin{align*}
d\tilde{p}_{a_1,\cdots, a_n}(k)&=d\tilde{p}_{a_1,\cdots, a_n}((1,k)H)\\
&=e^{-(a_1,\cdots, a_n)\rho((1,k))\trans (-(n-1),1,\cdots,1)}dc(k)\\
&=e^{na_k}dc(k)\quad ((1,k)\in \frak{S}_n, k=1,\cdots, n). 
\end{align*}
Since this is integrable for any $(a_1,\cdots, a_n)\in W$, we have 
$\Theta=\{ (a_1,\cdots, a_n)\in W \}$ and 
$c_\theta=\frac{1}{\sum_{i=1}^n e^{na_i}}$. 
By a change of variables $s_k=\frac{e^{na_k}}{\sum_{i=1}^n e^{na_i}}$, 
we obtain the family of categorical distributions (\ref{fami:cate}). 
\end{proof}

\subsection{Gamma and inverse gamma distributions}
In this subsection, we realize the families of gamma and inverse gamma distributions on $\R_{>0}$ by using one dimensional representations of $\R_{>0}$. 
\begin{proposition}\label{prop:gamma}
Put $G=\R_{>0}$ and $H=\{1\}$. 
For $\lambda\in \R$, we denote by 
$\rho_\lambda\colon G\to \R^\times $ a representation given by
\[ \rho_\lambda(x)=x^\lambda.  \]
Then $v_0:=1\in \R$ is $H$-fixed, and by our method we obtain 
the following family of distributions on $\R_{>0}$, 
\begin{align}
\left\{ \frac{|\lambda|}{\Gamma(k)}\left(\frac{x}{\theta}\right)^{k\lambda}e^{-(\frac{x}{\theta})^\lambda}\frac{dx}{x}\right\}_{(k,\theta)\in \R_{>0}\times \R_{>0}}. \label{fami:param_gamma}
\end{align}
Especially, we obtain the family of gamma distributions if $\lambda=1$ and the family of inverse gamma distributions if $\lambda=-1$ as follows:
\begin{align}
&\left\{\frac{1}{\Gamma(k)}\left(\frac{x}{\theta}\right)^k e^{-\frac{x}{\theta}}\frac{dx}{x} \right\}_{(k,\theta)\in \R_{>0}\times \R_{>0}} \quad (\lambda=1),\label{fami:gamma}\\
&\left\{\frac{1}{\Gamma(k)}\left(\frac{\theta}{x}\right)^{k} e^{-\frac{\theta}{x}}\frac{dx}{x} \right\}_{(k,\theta)\in \R_{>0}\times \R_{>0}} \quad (\lambda=-1). \label{fami:inverse_gamma}
\end{align}
\end{proposition}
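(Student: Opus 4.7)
The plan is to unpack the method of Subsection~\ref{subsec:our_method} in this particularly simple one-dimensional setting and reduce the normalization integral to the definition of the Gamma function.

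First, I would identify $X = G/H = \R_{>0}$ and note that, because $H = \{1\}$, every $v_0 \in \R$ is automatically $H$-fixed and every continuous homomorphism $\chi: \R_{>0} \to \R_{>0}$ automatically satisfies $\chi|_H = 1$. Such homomorphisms are exactly the power maps $x \mapsto x^\alpha$ for $\alpha \in \R$, so
\[
\Omega_0(G,H) = \{\,x \mapsto x^\alpha \mid \alpha \in \R\,\}.
\]
(This description is also consistent with Fact~\ref{fact:multiplier} since $G$ is abelian, hence $\Delta_G \equiv 1$, and $H$ is trivial.) As relatively $G$-invariant measure I would take the Haar measure $d\mu(x) = dx/x$ on $\R_{>0}$. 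Identifying $V^\vee = \R$ via the standard pairing and writing $\varphi \in \R$ for the dual parameter, the recipe produces
\[
d\tilde p_{\varphi,\alpha}(x) = \exp(-\varphi\, x^\lambda)\, x^\alpha\, \frac{dx}{x}.
\]

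Next I would determine $\Theta$ and compute $c_\theta$. Substituting $u = x^\lambda$ (valid for any $\lambda \neq 0$, with a sign flip absorbed into $|\lambda|$) gives
\[
\int_0^\infty e^{-\varphi x^\lambda} x^{\alpha-1}\, dx = \frac{1}{|\lambda|}\,\varphi^{-\alpha/\lambda}\,\Gamma(\alpha/\lambda),
\]
which is finite iff $\varphi > 0$ and $\alpha/\lambda > 0$. Thus
\[
\Theta = \{(\varphi,\alpha) \in \R \times \R \mid \varphi > 0,\ \alpha/\lambda > 0\}, \qquad c_\theta = \frac{|\lambda|\, \varphi^{\alpha/\lambda}}{\Gamma(\alpha/\lambda)}.
\]

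Finally I would perform the change of variables $k := \alpha/\lambda$ and $\theta := \varphi^{-1/\lambda}$, which is a bijection between $\Theta$ and $\R_{>0} \times \R_{>0}$. A direct substitution into $c_\theta\, d\tilde p_{\varphi,\alpha}$ turns it into the density displayed in (\ref{fami:param_gamma}), and specializing $\lambda = 1$ and $\lambda = -1$ yields (\ref{fami:gamma}) and (\ref{fami:inverse_gamma}) respectively. There is no real obstacle here; the only mild subtlety is being careful with the sign of $\lambda$ in the substitution $u = x^\lambda$ (so that $\lambda$ and $\alpha$ must have the same sign), which is exactly what produces the factor $|\lambda|$ in the final formula.
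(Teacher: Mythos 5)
Your proposal is correct and follows essentially the same route as the paper: determine $\Omega_0(G,H)=\{x\mapsto x^\alpha\}$, take the Haar measure $dx/x$, identify $\Theta=\{\varphi>0,\ \alpha/\lambda>0\}$ with normalizing constant $|\lambda|\varphi^{\alpha/\lambda}/\Gamma(\alpha/\lambda)$, and reparametrize by $k=\alpha/\lambda$, $\theta=\varphi^{-1/\lambda}$. The only difference is that you spell out the substitution $u=x^\lambda$ behind the normalization integral, which the paper states as a claim and omits as a direct calculation.
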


\begin{remark}
\begin{itemize}
\item By putting $k=1$ in the case of $\lambda=1$, we obtain the family of exponential distributions. 
\item By putting $k=\frac{n}{2}$ $(n\in \Z_{>0})$ and $\theta=2$ in the case of $\lambda =1$, 
we obtain the family of Chi-squared distributions. 
\item In the case of $\lambda =2$ and $k=1$, we obtain the family of Rayleigh distributions. 
\item In the case of $k=1$, we obtain the family of Weibull distributions with a shape parameter $\lambda>0$. 
\item In the case of $\lambda=-1$ and $k=\frac{1}{2}$, we obtain the family of unshifted L\'evy distributions. 
\end{itemize}
\end{remark}

\begin{proof}[Proof of Proposition~\ref{prop:gamma}]
%It is clear that $v_0=1$ is $H$-fixed. 
From the continuity of $\tau \in W_0(G,H)$, we get 
$W_0(G,H)= \{\tau\colon x\mapsto \alpha\log x\ |\ \alpha\in \R\}$. 
We identify $\R^\vee$ with $\R$ by taking the standard inner product on $\R$. 
Then we have 
\[ d\tilde{p}_{\alpha, \beta}(x)=\exp(-\beta x^\lambda)x^\alpha \frac{dx}{x}. \]
Here, $\frac{dx}{x}$ is the Haar measure on $\R_{>0}$. 
\begin{claim}
We have 
\begin{align*}
\Theta&:=\left\{(\alpha, \beta)\in \R\times \R\ \Big|\ \int_{\R_{>0}}d\tilde{p}_{\alpha,\beta}<\infty \right\}\\
&=\{ (\alpha,\beta)\in \R\times \R\ |\ \beta>0, \alpha\lambda>0\},  \\
c_{\alpha,\beta}^{-1}&:=\int_{\R_{>0}}d\tilde{p}_{\alpha,\beta}=\frac{\Gamma(\frac{\alpha}{\lambda})}{|\lambda|\beta^{\frac{\alpha}{\lambda}}}. 
\end{align*}
\end{claim}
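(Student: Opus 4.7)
The plan is to evaluate the integral $\int_{\R_{>0}} d\tilde{p}_{\alpha,\beta} = \int_0^\infty e^{-\beta x^\lambda} x^{\alpha - 1}\,dx$ directly, determine the set of $(\alpha,\beta)\in\R^2$ for which it is finite, and compute its value on that set.

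First I would settle the convergence region by inspecting the integrand at the two endpoints $0$ and $\infty$, separately for the cases $\lambda>0$, $\lambda<0$, and $\lambda=0$. At an endpoint where $x^\lambda\to 0$, the exponential factor tends to $1$ and convergence reduces to integrability of $x^{\alpha-1}$ there; at an endpoint where $x^\lambda\to\infty$, convergence forces $\beta>0$ and is then automatic by super-polynomial decay. For $\lambda>0$ this gives $\alpha>0$ (at $0$) together with $\beta>0$ (at $\infty$); for $\lambda<0$ it gives $\alpha<0$ (at $\infty$) together with $\beta>0$ (at $0$); for $\lambda=0$ the integrand reduces to $e^{-\beta}x^{\alpha-1}$, which is never integrable on $(0,\infty)$. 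In every case the convergence region is exactly $\{\beta>0,\ \alpha\lambda>0\}$.

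Second, on this region I would perform the change of variables $u=\beta x^\lambda$, which is a diffeomorphism of $(0,\infty)$ onto itself, orientation-preserving for $\lambda>0$ and orientation-reversing for $\lambda<0$. A routine Jacobian computation gives $x^{\alpha-1}\,dx = \tfrac{1}{\lambda}\,\beta^{-\alpha/\lambda}\,u^{\alpha/\lambda-1}\,du$, so
\[
\int_0^\infty e^{-\beta x^\lambda} x^{\alpha-1}\,dx \;=\; \frac{\beta^{-\alpha/\lambda}}{\lambda}\int_{u(0^+)}^{u(\infty)} e^{-u} u^{\alpha/\lambda-1}\,du,
\]
where the limits are $(0,\infty)$ if $\lambda>0$ and $(\infty,0)$ if $\lambda<0$. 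Either way the right-hand side collapses to $\tfrac{1}{|\lambda|}\beta^{-\alpha/\lambda}\,\Gamma(\alpha/\lambda)$, which is finite exactly under $\alpha/\lambda>0$, confirming both the integrability region and the announced formula for $c_{\alpha,\beta}^{-1}$.

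The only delicate point is the sign bookkeeping when $\lambda<0$: the substitution reverses orientation, so a $(-1)$ appears and combines with the factor $1/\lambda$ coming from $du = \beta\lambda x^{\lambda-1}\,dx$ to yield $1/|\lambda|$. Once this is handled, the claim reduces to the defining integral of the Gamma function.
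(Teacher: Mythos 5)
Your proof is correct, and it is essentially the paper's own (omitted) argument: the paper dismisses this claim as a ``direct calculation,'' and your endpoint analysis plus the substitution $u=\beta x^{\lambda}$ with the orientation/sign bookkeeping for $\lambda<0$ is precisely that calculation, yielding $\frac{1}{|\lambda|}\beta^{-\alpha/\lambda}\Gamma(\alpha/\lambda)$ on $\{\beta>0,\ \alpha\lambda>0\}$. One cosmetic remark: at the endpoint where $x^{\lambda}\to\infty$ the case $\beta=0$ is not ruled out by that endpoint alone (there the integrand is just $x^{\alpha-1}$), but combining both endpoints excludes it, exactly as in your $\lambda=0$ discussion, so the stated region is unaffected.
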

This claim can be proved by a direct calculation, so we omit the proof. 
As a result, we get
\[ dp_{\alpha,\beta}(x)=\frac{|\lambda|\beta^{\frac{\alpha}{\lambda}}}{\Gamma(\frac{\alpha}{\lambda})}x^\alpha e^{-\beta x^\lambda}\frac{dx}{x}. \]
By a change of variables $k=\frac{\alpha}{\lambda}$, $\theta=\beta^{-\frac{1}{\lambda}}$, 
we obtain the family (\ref{fami:param_gamma}). 
%\[ dp_{k, \theta}(x)=\frac{|\lambda|}{\Gamma(k)}\left(\frac{x}{\theta}\right)^{k\lambda}e^{-(\frac{x}{\theta})^\lambda}\frac{dx}{x}.  \]
%As a result, we obtain the desired conclusion. 
\end{proof}

\subsection{Wishart distributions}
In this subsection, we realize the family of Wishart distributions on the set $\Sym^+(n,\R)$ of all positive definite symmetric matrices of size $n$ by using a representation of $GL(n,\R)$.  
The case $n=1$ is essentially the same as the previous subsection. 

\begin{proposition}\label{prop:Wishart}
Let $n$ be a positive integer. 
Put $G=GL(n,\R)$ and $H=O(n)$. 
Let $\rho\colon G\to GL(\Sym(n,\R))$ be a representation as follows:
\[ \rho(g)S=gS\trans g \quad (S\in \Sym(n,\R)). \]
Then $v_0:=I_n$ is $H$-fixed, and by our method we obtain the family of Wishart distributions on $X=G/H\simeq \Sym^+(n,\R)$, % as follows:
\begin{align}\left\{\frac{(\det y)^\alpha}{\pi^{\frac{n(n-1)}{4}}\prod_{k=1}^n \Gamma(\alpha-\frac{k-1}{2})}\exp(-\Tr(yx))(\det x)^{\alpha-\frac{n+1}{2}}dx \right\}_{\{(y,\alpha)\in \Sym^+(n,\R)\times \R\ |\ \alpha>\frac{n-1}{2}\}}.\label{fami:Wishart} \end{align}
\end{proposition}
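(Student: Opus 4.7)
The plan is to follow the same template as the earlier examples: identify $X=G/H$ with the sample space $\Sym^+(n,\R)$, determine $\Omega_0(G,H)$, pick a convenient relatively invariant measure, write out $d\tilde p_\theta$ explicitly, determine the integrability region together with the normalizing constant via the multivariate gamma integral, and finally reparametrize to match (\ref{fami:Wishart}).

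Concretely, I would first observe that $h I_n \trans h = I_n$ for $h\in O(n)$ makes $v_0=I_n$ an $H$-fixed vector, and that the map $gH\mapsto g\trans g$ gives a $G$-equivariant diffeomorphism $X\simeq\Sym^+(n,\R)$ (with $G$ acting on the target by $g\cdot x=gx\trans g$), under which $x\cdot v_0=x$. Lebesgue measure $dx$ on $\Sym(n,\R)$ restricted to $\Sym^+(n,\R)$ is a relatively $G$-invariant measure with multiplier $|\det g|^{n+1}$; by Remark~\ref{rem:choice_of_rel_inv} the resulting family does not depend on this choice.

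Next, to compute $\Omega_0(G,H)$: a continuous homomorphism $\chi:GL(n,\R)\to\R_{>0}$ is automatically smooth, and its differential at the identity is a Lie algebra homomorphism $\mathfrak{gl}(n,\R)\to\R$ vanishing on $[\mathfrak{gl}(n,\R),\mathfrak{gl}(n,\R)]=\mathfrak{sl}(n,\R)$, hence a scalar multiple of the trace. Combined with the positivity of $\chi$, this forces $\chi(g)=|\det g|^\alpha$ for some $\alpha\in\R$, and $\chi|_{O(n)}=1$ then holds automatically. Passing to $X$ via $|\det g|=(\det x)^{1/2}$, the character reads $\chi(x)=(\det x)^\beta$ with $\beta=\alpha/2$ ranging over $\R$. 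Using the trace pairing $\pairing{y}{x}=\Tr(yx)$ to identify $\Sym(n,\R)^\vee$ with $\Sym(n,\R)$, the unnormalized density becomes
\[ d\tilde p_{y,\beta}(x)=\exp(-\Tr(yx))(\det x)^\beta\,dx, \qquad y\in\Sym(n,\R),\ \beta\in\R. \]

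The computation then reduces to the classical multivariate gamma integral
\[ \int_{\Sym^+(n,\R)}\exp(-\Tr(yx))(\det x)^{\alpha-\frac{n+1}{2}}\,dx = \pi^{\frac{n(n-1)}{4}}\prod_{k=1}^n\Gamma\left(\alpha-\tfrac{k-1}{2}\right)(\det y)^{-\alpha}, \]
which converges precisely when $y\in\Sym^+(n,\R)$ and $\alpha>\frac{n-1}{2}$. Writing $\alpha=\beta+\frac{n+1}{2}$, this identifies $\Theta$ with $\{(y,\alpha)\in\Sym^+(n,\R)\times\R : \alpha>(n-1)/2\}$, supplies the normalizing constant, and yields (\ref{fami:Wishart}) directly. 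The main obstacle is not any single computational step but care with $\Omega_0(G,H)$: one must exclude possible extra characters arising from the disconnectedness of $GL(n,\R)$, which is resolved by the positivity requirement on $\chi$. The multivariate gamma integral itself is standard and reduces, via the Cholesky decomposition $x=\ell\trans\ell$ with $\ell$ lower triangular with positive diagonal, to a product of one-dimensional gamma integrals.
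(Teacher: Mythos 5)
Your proposal is correct and follows essentially the same route as the paper: identify $G/H$ with $\Sym^+(n,\R)$, show $\Omega_0(G,H)=\{g\mapsto|\det g|^t\}$, write $d\tilde p_\theta$ against a relatively invariant measure, and invoke the classical Wishart--Ingham--Siegel integral for $\Theta$ and the normalizing constant. The only differences are cosmetic: you use Lebesgue measure with multiplier $|\det g|^{n+1}$ where the paper uses the invariant measure $\frac{dx}{(\det x)^{(n+1)/2}}$ (equivalent by Remark~\ref{rem:choice_of_rel_inv}), and you determine the characters by a Lie-algebra/positivity argument where the paper factors through $G/[G,G]\simeq\R^\times$; both variants are sound.
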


%\begin{claim}
%$v_0=I_n$ is $H$-fixed. 
%\end{claim}
%\begin{proof}
\begin{proof}[Proof of Proposition~\ref{prop:Wishart}]
%It is clear that $v_0=I_n$ is $H$-fixed by the definition of $O(n)$. 
\begin{comment}
Next we show $v_0$ is cyclic. 
Since for $X\in \Sym^+(n,\R)$, there exists $g\in GL(n,\R)$ such that 
$X=g\trans g$, 
there exists $g_{ij}\in GL(n,\R)$ such that 
$g_{ij}v_0\trans g_{ij}=2E_{ii}+2E_{jj}+E_{ij}+E_{ji}+\sum_{k\neq i,j}E_{kk}$ ($1\leq i<j\leq n$). 
In the same way, there exists $g_i\in GL(n,\R)$ such that 
$g_iv_0\trans g_i=2E_{ii}+\frac{1}{2}E_{i+1 i+1}+\sum_{k\neq i,i+1}E_{ii}$ ($i=1,\cdots, n-1$). 
Then we have $\spanvec\{v_0, g_{ij}v_0\trans g_{ij}, g_\ell v_0\trans g_\ell|1\leq i<j\leq n, 1\leq \ell\leq n-1\}=\Sym(n,\R)$. 
Thus, $v_0$ is cyclic. 
\end{comment}
%\end{proof}

\begin{claim}
$W_0(G,H)=\{ g\mapsto t\log |\det g|\ |\ t\in \R\}$. 
\end{claim}
This claim follows from $G/[G,G]\simeq \R^\times$, Remark~\ref{rem:factor} and Fact~\ref{fact:conti_hom_R}. 
\begin{comment}
\begin{proof}
Let $\chi\in \Omega_0(G,H)$. 
Since $\ker \chi$ is a normal subgroup of $GL(n,\R)$, 
we have $SL(n,\R)\subset \ker \chi$. 
Therefore $\chi$ factors $GL(n,\R)/SL(n,\R)\simeq \R^\times $.
Since $\{ \chi:\R^\times \to \R_{>0}|\chi \text{ is a group homomorphism}\}=\{x\mapsto |x|^\alpha\ |\ \alpha \in \R\}$, 
by composing them with the natural projection $GL(n,\R)\to GL(n,\R)/SL(n,\R)$, $g\mapsto \det g$, 
we obtain the desired conclusion. 
\end{proof}
\end{comment}
We take an $G$-invariant measure $\frac{dx}{(\det x)^{\frac{n+1}{2}}}$ on $\Sym^+(n,\R)$ 
and identify $\Sym(n,\R)^\vee$ with $\Sym(n,\R)$ by the following inner product: 
\[\pairing{x}{y}=\Tr(xy)\quad (x,y\in \Sym(n,\R)).  \]
Then we have 
\[ d\tilde{p}_{y,\alpha}(x)= \exp(-\Tr(yx))(\det x)^\alpha \frac{dx}{(\det x)^\frac{n+1}{2}}. \]
Here we identify $GL(n,\R)/O(n)$ with $\Sym^+(n,\R)$ by $g O(n)\mapsto x=gI_n\trans g$. 
To normalize the above measure, we use Fact~\ref{normalizing_constant_Wishart}. 
As a result, we obtain Wishart distribution (\ref{fami:Wishart}). 
\end{proof}

\begin{fact}\label{fact:conti_hom_R}
$\{f\colon \R^\times \to \R \ |\ f \text{ is a continuous group homomorphism }\}=\{ x\mapsto \lambda\log |x|\ |\ \lambda\in \R\}$. 
\end{fact}

\begin{fact}[\cite{wishart,ingham,siegel}, see also \cite{ishi14} for example]\label{normalizing_constant_Wishart}
$\Theta=\{ (y,\alpha)\in \Sym(n,\R)\times \R\ |\ y\in \Sym^+(n,\R), \alpha>\frac{n-1}{2}\}$. 
\[ \int_{x\in \Sym^+(n,\R)} \exp(-\Tr(yx))(\det x)^\alpha \frac{dx}{(\det x)^\frac{n+1}{2}}=\pi^{\frac{n(n-1)}{4}}\prod_{k=1}^n \Gamma(\alpha-\tfrac{k-1}{2})(\det y)^{-\alpha}. \]
\end{fact}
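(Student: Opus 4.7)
The plan is to exploit the $GL(n,\R)$-equivariance of the integrand to reduce to the case $y=I_n$, and then to factor the resulting integral by a Cholesky decomposition into one-dimensional gamma and Gaussian integrals.

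First, I would record that $\frac{dx}{(\det x)^{(n+1)/2}}$ is a $GL(n,\R)$-invariant measure on $\Sym^+(n,\R)\simeq GL(n,\R)/O(n)$ under the action $x\mapsto gxg^t$. For $y\in \Sym^+(n,\R)$, write $y=aa^t$ with $a\in GL(n,\R)$ and substitute $x=(a^{-1})^t z a^{-1}$. Then $\Tr(yx)=\Tr(z)$ and $\det x=(\det y)^{-1}\det z$, while the invariant measure is preserved. Pulling out the factor $(\det y)^{-\alpha}$ produced by $(\det x)^\alpha$, the integral reduces to
\[ I(y,\alpha)=(\det y)^{-\alpha}\int_{\Sym^+(n,\R)} e^{-\Tr(z)}(\det z)^{\alpha-(n+1)/2}\,dz. \]

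Next, I would evaluate the remaining integral via the Cholesky decomposition $z=tt^t$ with $t=(t_{ij})$ lower triangular and $t_{ii}>0$. The classical Jacobian is
\[ dz = 2^n\prod_{i=1}^n t_{ii}^{n+1-i}\prod_{i\geq j}dt_{ij}, \]
together with $\det z=\prod_i t_{ii}^2$ and $\Tr(z)=\sum_{i\geq j}t_{ij}^2$. Substituting, the integral splits into a product of $n(n-1)/2$ Gaussian integrals $\int_\R e^{-t_{ij}^2}\,dt_{ij}=\sqrt{\pi}$ (for $i>j$) and $n$ diagonal integrals $\int_0^\infty e^{-t_{ii}^2}t_{ii}^{2\alpha-i}\,dt_{ii}$. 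The Gaussian factors combine to $\pi^{n(n-1)/4}$; for each diagonal factor the substitution $u=t_{ii}^2$ turns it into $\tfrac{1}{2}\Gamma(\alpha-(i-1)/2)$. The $2^n$ from the Jacobian then cancels the $2^{-n}$ from the diagonal factors, leaving exactly $\pi^{n(n-1)/4}\prod_{k=1}^n\Gamma(\alpha-(k-1)/2)(\det y)^{-\alpha}$, as claimed.

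The constraint $\alpha>(n-1)/2$ arises from the convergence of the most singular diagonal integral (at $i=n$), which requires $\alpha-(n-1)/2>0$; conversely this suffices for all diagonal integrals to converge, and the Gaussian and equivariant reductions pose no further restrictions. The main bookkeeping hazard is tracking the powers of $t_{ii}$ coming simultaneously from $(\det z)^{\alpha-(n+1)/2}$ and from the Jacobian factor $t_{ii}^{n+1-i}$, so that each diagonal integral matches $\Gamma(\alpha-(k-1)/2)$ with the correct shift. Once this setup is in place, everything else reduces to standard one-variable calculus.
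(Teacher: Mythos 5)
The paper does not prove this statement at all: it is quoted as a classical Fact with references to Wishart, Ingham, Siegel and Ishi, and the surrounding Wishart proposition simply invokes it for the normalizing constant. Your proposal supplies an actual proof, and it is the standard one: the reduction to $y=I_n$ via $y=aa^{t}$ and the $GL(n,\R)$-invariance of $dx/(\det x)^{(n+1)/2}$ is correct ($\Tr(yx)=\Tr(z)$, $\det x=(\det y)^{-1}\det z$), the Cholesky Jacobian $dz=2^{n}\prod_i t_{ii}^{\,n+1-i}\prod_{i\geq j}dt_{ij}$ is the classical formula, and your bookkeeping checks out: the exponent of $t_{ii}$ is $2\alpha-i$, each diagonal integral gives $\tfrac12\Gamma\bigl(\alpha-\tfrac{i-1}{2}\bigr)$ after $u=t_{ii}^{2}$, the $2^{n}$ cancels, and the $n(n-1)/2$ off-diagonal Gaussians give $\pi^{n(n-1)/4}$, yielding exactly the stated formula with the convergence condition $\alpha>\tfrac{n-1}{2}$ coming from the $i=n$ factor.

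The one genuine omission concerns the description of $\Theta$ itself: you verify convergence for $y\in\Sym^{+}(n,\R)$, $\alpha>\tfrac{n-1}{2}$, and divergence for $\alpha\leq\tfrac{n-1}{2}$ when $y$ is positive definite, but the statement also asserts that no $(y,\alpha)$ with $y\notin\Sym^{+}(n,\R)$ lies in $\Theta$, and your equivariant reduction presupposes $y\succ 0$ (it needs $y=aa^{t}$). To close this you need a short extra argument: if $y$ has a negative eigenvalue with eigenvector $v$, integrate over $x=w+s\,vv^{t}$ with $w$ in a fixed compact subset of the cone and $s\to\infty$; then $e^{-\Tr(yx)}$ grows exponentially in $s$ while $(\det x)^{\alpha-(n+1)/2}$ is only polynomial, so the integral diverges; if $y$ is positive semidefinite but singular, the same family of rays makes $\Tr(yx)$ bounded and the integrand decay only like $s^{\alpha-(n+1)/2}$, which is not integrable when $\alpha\geq\tfrac{n-1}{2}$, while for $\alpha\leq\tfrac{n-1}{2}$ the integral already diverges near the boundary of the cone since $(\det x)^{\alpha-(n+1)/2}$ fails to be locally integrable there. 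With that addition your argument fully establishes the Fact independently of the cited literature.
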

\subsection{Von Mises distribution}
In this subsection, we realize the family of von Mises distributions on the circle $S^1$ by using the natural representation of $SO(2)$. 
\begin{proposition}\label{prop:Mises}
Put $G=SO(2)$ and $H=\{ I_2 \}$. 
Let $\rho\colon G\to GL(\R^2)$ be the natural representation. 
Then $v_0:=e_1\in \R^2$ is $H$-fixed, and by our method we obtain the family of von Mises distributions on $S^1$, 
\begin{align}\left\{ \frac{1}{2\pi I_0(\kappa)}e^{-\kappa \cos(x-\mu)}dx \right\}_{(\kappa, \mu) \in \R_{>0}\times \R/2\pi\Z}.  \label{fami:Mises} \end{align}
\end{proposition}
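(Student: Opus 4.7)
The plan is to run through the general recipe of Section~\ref{subsec:our_method} with the explicit data $(SO(2), \{I_2\}, \R^2, e_1)$ and recognize the resulting normalizing constant as a modified Bessel function. First, since $G=SO(2)$ is a compact Lie group, Proposition~\ref{prop:triv_omega}~(i) forces $\Omega_0(G,H)=\{1\}$, so the character factor $\chi$ drops out of $d\tilde p_\theta$ and the parameter reduces to $\varphi\in(\R^2)^\vee$.

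Next I would make everything explicit via the parametrization
\[
g_x=\begin{pmatrix}\cos x & -\sin x\\ \sin x & \cos x\end{pmatrix}\in SO(2),\qquad x\in \R/2\pi\Z,
\]
under which $G/H\simeq S^1$ and the Haar measure $dx$ on $SO(2)$ is a $G$-invariant (hence relatively invariant) measure. The action on $v_0=e_1$ gives $g_x\cdot v_0=(\cos x,\sin x)^{t}$. Identifying $(\R^2)^\vee$ with $\R^2$ via the standard inner product and writing $\varphi=(\varphi_1,\varphi_2)$, one obtains
\[
d\tilde p_\varphi(x)=\exp\bigl(-(\varphi_1\cos x+\varphi_2\sin x)\bigr)\,dx.
\]

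Since $S^1$ is compact and the integrand is continuous, $\int_{S^1}d\tilde p_\varphi<\infty$ for every $\varphi\in\R^2$, so $\Theta=\R^2$. To match the statement, I would introduce polar coordinates $(\kappa,\mu)\in\R_{\geq 0}\times \R/2\pi\Z$ on the parameter space by
\[
\varphi_1=\kappa\cos\mu,\qquad \varphi_2=\kappa\sin\mu,
\]
so that $\varphi_1\cos x+\varphi_2\sin x=\kappa\cos(x-\mu)$. The map $(\kappa,\mu)\mapsto\varphi$ is a bijection onto $\R^2$ once $\kappa>0$ is imposed and $\mu$ is taken modulo $2\pi$ (the locus $\kappa=0$ is a single point that one may either include or discard).

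Finally I would compute the normalizing constant using the standard integral representation of the modified Bessel function of the first kind,
\[
I_0(\kappa)=\frac{1}{2\pi}\int_0^{2\pi}e^{\kappa\cos t}\,dt,
\]
together with the evenness $I_0(-\kappa)=I_0(\kappa)$ and the $2\pi$-periodicity of the integrand, which give
\[
\int_{S^1}e^{-\kappa\cos(x-\mu)}\,dx=2\pi I_0(\kappa).
\]
Hence $c_\theta=(2\pi I_0(\kappa))^{-1}$ and $dp_\theta$ is exactly the family~(\ref{fami:Mises}). There is no real obstacle here: the only nontrivial ingredient is recognizing the Bessel integral, and the bulk of the argument is the bookkeeping of the change of variables $\varphi\mapsto(\kappa,\mu)$.
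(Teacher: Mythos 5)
Your proposal is correct and follows essentially the same route as the paper: trivializing $\Omega_0(G,H)$ via Proposition~\ref{prop:triv_omega}~(i), identifying $(\R^2)^\vee$ with $\R^2$ by the standard inner product, rewriting $\varphi_1\cos x+\varphi_2\sin x$ as $\kappa\cos(x-\mu)$, and evaluating the normalizing constant with the Bessel integral $I_0$. The only difference is cosmetic: you spell out the change of variables and the $\kappa=0$ locus slightly more explicitly than the paper does.
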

\begin{proof}
We use the identification $G/H=SO(2)/\{ I_2\}\simeq S^1\simeq \R/2\pi\Z$. 
From Fact~\ref{prop:triv_omega} (ii), %\ref{fact:compact}, 
we have $W_0(G,H)=\{0\}$. 
We identify $(\R^2)^\vee$ with $\R^2$ by taking the standard inner product on $\R^2$. 
Then we have 
\[ d\tilde{p}_{a,b}(x) = e^{-(a\cos x +b\sin x)}dx=e^{-\sqrt{a^2+b^2}\cos(x-\alpha)}dx, \]
where $dx$ is the uniform measure on $S^1$ with $\int_{S^1}dx=2\pi$, which is the Haar measure on $SO(2)$, and $\alpha\in \R/2\pi\Z$ satisfies $\cos \alpha=\frac{a}{\sqrt{a^2+b^2}}$, $\sin \alpha=\frac{b}{\sqrt{a^2+b^2}}$. 
Since $S^1$ is compact, we have $\Theta=\{\theta=(a,b)\in \R^2\}$, and 
$c_{\theta}^{-1}=2\pi I_0(\sqrt{a^2+b^2})$. 
Here $I_m(r)$ is the modified Bessel function of the first kind of order $m$ given as follows:
\begin{align} I_m(r)=\frac{1}{2\pi}\int_0^{2\pi} \cos (mx) e^{r\cos x} dx\quad (m\in \Z, r\in \R).  \label{modified_Bessel}\end{align}
Therefore by a change of variables $\kappa=\sqrt{a^2+b^2}$, $\mu=\alpha$, 
we obtain the family of von Mises distributions (\ref{fami:Mises}). 
\end{proof}

\subsection{Von Mises--Fisher distribution}
In this subsection, we realize the family of von Mises--Fisher distributions on the sphere $S^{n-1}$ by using the natural representation of $SO(n)$ for a positive integer $n\geq 2$. 

\begin{proposition}
Put $G=SO(n)$ and $H=SO(n-1)$. 
Let $\rho\colon G\to GL(\R^n)$ be the natural representation. 
Then $v_0:=e_1\in \R^n$ is $H$-fixed, and by our method 
we obtain the family of von Mises--Fisher distributions on $S^{n-1}$, 
\begin{align}
\left\{\frac{1}{(2\pi)^{\frac{n}{2}}\|\mu\|^{1-\frac{n}{2}}I_{\frac{n}{2}-1}(\|\mu\|)}\exp(-\trans \mu x)dx \right\}_{\mu \in \R^n}. 
\end{align}
Here, $dx$ is the uniform measure on $S^{n-1}$ with $\int_{S^{n-1}}dx=\frac{2\pi^{\frac{n}{2}}}{\Gamma(\frac{n}{2})}$. 
\end{proposition}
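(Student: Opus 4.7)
The plan is to mirror the proof of Proposition~\ref{prop:Mises} for the circle, with the only new ingredient being the higher-dimensional Bessel-function integral on the sphere.

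First I would set up the geometric identifications. The map $gH \mapsto g\cdot e_1$ gives $G/H = SO(n)/SO(n-1) \simeq S^{n-1}$, and the standard embedding $SO(n-1) \hookrightarrow SO(n)$ as the stabilizer of $e_1$ makes $v_0=e_1$ visibly $H$-fixed. Since $G=SO(n)$ is compact, Proposition~\ref{prop:triv_omega}~(i) gives $\Omega_0(G,H)=\{1\}$, so the only parameter coming from our recipe is the vector $\mu \in (\R^n)^\vee$. I would identify $(\R^n)^\vee$ with $\R^n$ via the standard inner product, take the unique (up to scalar) $G$-invariant measure $dx$ on $S^{n-1}$ as the relatively invariant measure, and write
\[
d\tilde{p}_\mu(x) = \exp(-\trans\!\mu\, x)\, dx \qquad (x \in S^{n-1}).
\]
Because $S^{n-1}$ is compact, $\int_{S^{n-1}} d\tilde p_\mu < \infty$ for every $\mu \in \R^n$, so $\Theta = \R^n$.

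Next I would compute the normalizing constant $c_\mu^{-1} = \int_{S^{n-1}} \exp(-\trans\!\mu\, x)\, dx$. By $G$-invariance of $dx$, rotating $\mu$ to $-\|\mu\| e_1$ shows the integral depends only on $\|\mu\|$:
\[
c_\mu^{-1} = \int_{S^{n-1}} \exp(\|\mu\|\, x_1)\, dx.
\]
Then I would invoke the classical identity
\[
\int_{S^{n-1}} e^{r x_1}\, dx = (2\pi)^{n/2} r^{1-n/2} I_{n/2-1}(r),
\]
which generalizes (\ref{modified_Bessel}); this is the standard integral representation of the modified Bessel function on the sphere (for $n=2$ it recovers the formula used in Proposition~\ref{prop:Mises}). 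Substituting $r=\|\mu\|$ yields exactly $c_\mu^{-1} = (2\pi)^{n/2}\|\mu\|^{1-n/2} I_{n/2-1}(\|\mu\|)$.

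Combining these pieces gives
\[
dp_\mu(x) = \frac{1}{(2\pi)^{n/2}\|\mu\|^{1-n/2} I_{n/2-1}(\|\mu\|)}\exp(-\trans\!\mu\, x)\, dx,
\]
which is the stated family. The only non-formal step is the Bessel-function evaluation of the sphere integral; I would either cite it from a standard reference or sketch it by writing $x_1 = \cos\theta$, integrating out the latitude to reduce to the defining integral of $I_{n/2-1}$. Everything else is bookkeeping inherited from Section~\ref{subsec:our_method}.
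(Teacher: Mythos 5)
Your proposal is correct and follows essentially the same route as the paper: identify $SO(n)/SO(n-1)\simeq S^{n-1}$, get $\Omega_0(G,H)=\{1\}$ from compactness via Proposition~\ref{prop:triv_omega}~(i), conclude $\Theta=\R^n$ by compactness of the sphere, and evaluate the normalizing constant by the Bessel-function integral, which the paper simply cites as Fact~\ref{fact:normalizing_const_for_MF}. Your extra remark that rotation invariance reduces the integral to dependence on $\|\mu\|$ only is a harmless elaboration of that cited fact.
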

\begin{proof}
We use the identification $SO(n)/SO(n-1)\simeq S^{n-1}$, $gSO(n-1)\mapsto ge_1$. 
Since $G$ is compact, from Fact~\ref{prop:triv_omega} (ii), 
we obtain $W_0(G,H)=\{0\}$. 
We identify $(\R^n)^\vee$ with $\R^n$ by taking the standard inner product on $\R^n$. 
Then we have 
\begin{align*}
d\tilde{p}_\mu(x)=\exp(-\trans \mu x)dx, 
\end{align*}
where $dx$ is the uniform measure on $S^{n-1}$, which is $G$-invariant. 
Since $S^{n-1}$ is compact, we have $\Theta=\{\theta=\mu \in \R^n\}$. 
Therefore we obtain the family of von Mises--Fisher distributions on $S^{n-1}$ from Fact~\ref{fact:normalizing_const_for_MF}. 
\end{proof}
\begin{fact}[{\cite[Example~8.3]{Barndorff-Nielsen89}}]\label{fact:normalizing_const_for_MF}
For $\mu \in \R^n$, 
\[ \int_{S^{n-1}}\exp(-\trans \mu x)dx=(2\pi)^{\frac{n}{2}}\|\mu\|^{1-\frac{n}{2}}I_{\frac{n}{2}-1}(\|\mu\|). \]
Here $dx$ is the $SO(n)$-invariant measure on $S^{n-1}$ with $\int_{S^{n-1}}dx=\frac{2\pi^{\frac{n}{2}}}{\Gamma(\frac{n}{2})}$ and $I_m(r)$ denotes the modified Bessel function of the first kind of order $m$ (see (\ref{modified_Bessel})). 
\end{fact}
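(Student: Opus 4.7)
The plan is to reduce the spherical integral to a classical integral representation of the modified Bessel function, in three steps.

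First, I would exploit the $SO(n)$-invariance of $dx$ to kill the dependence on the direction of $\mu$. Choose $g \in SO(n)$ with $g\mu = \|\mu\|e_1$; then the substitution $x \mapsto g^{-1}x$ gives
\[
\int_{S^{n-1}} e^{-\trans\mu x}\,dx = \int_{S^{n-1}} e^{-\|\mu\| x_1}\,dx,
\]
so it suffices to compute the right-hand side as a function of $r := \|\mu\|$.

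Second, I would parametrize $S^{n-1}$ via the colatitude $\theta \in [0,\pi]$ relative to $e_1$: write $x = (\cos\theta,\,\sin\theta\cdot y)$ with $y\in S^{n-2}$. The standard Jacobian gives $dx = \sin^{n-2}\theta\,d\theta\,d\sigma_{n-2}(y)$, where $d\sigma_{n-2}$ is the round measure on $S^{n-2}$ with total mass $\mathrm{Vol}(S^{n-2}) = 2\pi^{(n-1)/2}/\Gamma((n-1)/2)$. Since the integrand depends only on $\theta$, the $y$-integral separates, and after the substitution $\theta \mapsto \pi - \theta$ we are left with
\[
\mathrm{Vol}(S^{n-2}) \int_0^\pi e^{r\cos\theta}\sin^{n-2}\theta\,d\theta.
\]

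Third, I would invoke the Poisson-type representation
\[
I_\nu(r) = \frac{(r/2)^\nu}{\sqrt{\pi}\,\Gamma(\nu+\tfrac{1}{2})}\int_0^\pi e^{r\cos\theta}\sin^{2\nu}\theta\,d\theta
\]
at $\nu = n/2 - 1$, so that $2\nu = n-2$ matches the exponent of $\sin\theta$. Substituting and simplifying, the $\Gamma((n-1)/2)$ in the volume of $S^{n-2}$ cancels against the $\Gamma(\nu+\tfrac{1}{2})=\Gamma((n-1)/2)$ in the Bessel denominator; collecting the remaining factors $2\cdot 2^{n/2-1} = 2^{n/2}$ and $\pi^{(n-1)/2}\cdot\sqrt{\pi} = \pi^{n/2}$ produces exactly $(2\pi)^{n/2}r^{1-n/2}I_{n/2-1}(r)$, as claimed.

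The only subtle point is that the definition (\ref{modified_Bessel}) of $I_m$ in the paper is stated only for integer orders $m$, whereas the index $n/2 - 1$ is a half-integer when $n$ is odd. The clean resolution is to take the Poisson representation above as the definition of $I_\nu$ for all real $\nu$; for integer $\nu$ one recovers (\ref{modified_Bessel}) from the identity $\int_0^\pi e^{r\cos\theta}\cos(m\theta)\,d\theta = \pi I_m(r)$ together with the parity of $\cos\theta$ on $[0,\pi]$. Once this interpretive point is accepted, everything reduces to bookkeeping with Gamma functions, so this is really the main \emph{conceptual} obstacle rather than a computational one.
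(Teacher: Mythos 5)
Your argument is correct. Note, however, that the paper itself offers no proof of this statement: it is quoted as a Fact with a citation to \cite[Example~8.3]{Barndorff-Nielsen89}, so there is no internal proof to compare against; what you have supplied is the standard self-contained derivation that the citation stands in for. Your three steps check out: rotational invariance reduces the integral to the zonal case $\int_{S^{n-1}}e^{-\|\mu\|x_1}\,dx$ (here it matters, and is true, that the intended $dx$ is the unnormalized round surface measure --- one can confirm this normalization by letting $\mu\to 0$ in the stated formula, which recovers $\mathrm{Vol}(S^{n-1})=2\pi^{n/2}/\Gamma(n/2)$); the tangent--normal decomposition $x=(\cos\theta,\sin\theta\,y)$ with $dx=\sin^{n-2}\theta\,d\theta\,d\sigma_{n-2}(y)$ is the right disintegration for $n\geq 2$ (the only case used, since the Fact feeds the von Mises--Fisher construction with $H=SO(n-1)$); and Poisson's representation at $\nu=n/2-1$ gives $\int_0^\pi e^{r\cos\theta}\sin^{n-2}\theta\,d\theta=\sqrt{\pi}\,\Gamma(\tfrac{n-1}{2})(r/2)^{1-n/2}I_{n/2-1}(r)$, after which the constants combine to $(2\pi)^{n/2}r^{1-n/2}I_{n/2-1}(r)$ exactly as you state. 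Your closing remark is also well taken and not merely pedantic: the paper's definition (\ref{modified_Bessel}) only covers integer order, so for odd $n$ the symbol $I_{n/2-1}$ needs the general-order definition (e.g.\ via Poisson's integral or the series), and your observation that the two agree for integer order via $\int_0^\pi e^{r\cos\theta}\cos(m\theta)\,d\theta=\pi I_m(r)$ is the correct way to reconcile them.
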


\subsection{Fisher--Bingham distribution}
In this subsection, we realize the family of Fisher--Bingham distributions on the sphere $S^{n-1}$ by using a representation of $SO(n)$ for a positive integer $n\geq 2$. 
\begin{proposition}
Put $G=SO(n)$ and $H=SO(n-1)=\left\{\begin{pmatrix}1 \\ & k \end{pmatrix}\ \Big|\ k \in SO(n-1)\right\}\subset G$. 
Let $\rho\colon G\to GL(\R^n\oplus \Sym(n,\R))$ be a representation given as follows:
\[ \rho(g)(v,S):=(gv , gS\trans g) \quad ((v, S)\in \R^n\oplus \Sym(n,\R)). \]
Then $v_0:=(e_1, E_{11})\in \R^n\oplus \Sym(n,\R)$ is $H$-fixed, and 
by our method we obtain the family of Fisher--Bingham distributions on $S^{n-1}$, 
\begin{align} \{c_{\mu, A}\exp(-\trans \mu x-\trans x A x)dx \}_{(\mu,A)\in \R^n\oplus \Sym(n,\R)}, \label{fami:Fisher--Bingham}\end{align}
where $c_{\mu, A}$ is the corresponding normalizing constant. 
\end{proposition}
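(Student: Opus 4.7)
My plan is to follow the same template used for the von Mises and von Mises--Fisher cases, extended to accommodate the direct sum representation. The first step is to set up the identifications: $G/H = SO(n)/SO(n-1) \simeq S^{n-1}$ via $gH \mapsto ge_1$, and the pairings on $V^\vee$, which I would fix by identifying $(\R^n)^\vee$ with $\R^n$ via the standard inner product and $\Sym(n,\R)^\vee$ with $\Sym(n,\R)$ via $\pairing{A}{S} = \Tr(AS)$, hence $V^\vee \simeq V$ via the direct sum of these.

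Next I would check that $v_0 = (e_1, E_{11})$ is $H$-fixed. For $h = \begin{pmatrix} 1 & \\ & k \end{pmatrix}$ with $k \in SO(n-1)$, one has $he_1 = e_1$ and $h E_{11} \trans h = (he_1)\trans(he_1) = e_1\trans e_1 = E_{11}$, since $E_{11} = e_1 \trans e_1$. Because $G = SO(n)$ is compact, Proposition~\ref{prop:triv_omega}~(i) yields $\Omega_0(G,H) = \{1\}$, so the multiplier factor is trivial and $\mu$ can be taken to be the $SO(n)$-invariant (uniform) measure $dx$ on $S^{n-1}$.

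Now I would compute the orbit map. For $x = ge_1 \in S^{n-1}$,
\[
\rho(g) v_0 = (ge_1,\, g E_{11} \trans g) = (x,\, x \trans x),
\]
and this is well-defined on $G/H$ since $v_0$ is $H$-fixed. For parameters $(\mu, A) \in \R^n \oplus \Sym(n,\R)$, the pairing reads
\[
\pairing{(\mu,A)}{(x, x\trans x)} = \trans \mu x + \Tr(A \, x \trans x) = \trans \mu x + \trans x A x,
\]
so
\[
d\tilde{p}_{\mu, A}(x) = \exp\bigl(-\trans \mu x - \trans x A x\bigr)\, dx.
\]

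Finally, since $S^{n-1}$ is compact and the integrand is a bounded continuous function for every $(\mu, A) \in \R^n \oplus \Sym(n,\R)$, the integral $\int_{S^{n-1}} d\tilde{p}_{\mu,A}$ is finite for all such parameters, giving $\Theta = \R^n \oplus \Sym(n,\R)$ and a well-defined normalizing constant $c_{\mu,A}$. Normalizing yields the family~(\ref{fami:Fisher--Bingham}). There is no real obstacle here beyond correctly bookkeeping the direct-sum pairing; the only mildly delicate point is the identity $gE_{11}\trans g = x\trans x$, which is immediate once one writes $E_{11} = e_1 \trans e_1$.
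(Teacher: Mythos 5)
Your proposal is correct and follows essentially the same route as the paper: the identification $SO(n)/SO(n-1)\simeq S^{n-1}$, triviality of $\Omega_0(G,H)$ by compactness of $G$, the direct-sum pairing giving $\trans\mu x+\trans xAx$, and compactness of $S^{n-1}$ yielding $\Theta=\R^n\oplus\Sym(n,\R)$. The explicit verification that $v_0=(e_1,E_{11})$ is $H$-fixed via $E_{11}=e_1\trans e_1$ is a small addition the paper leaves implicit, but otherwise the arguments coincide.
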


%\begin{remark}
%In the case of $A\mu=0$, we obtain Kent distribution. 
%\end{remark}
\begin{remark}
We obtain the family of Kent distributions on $S^{n-1}$ as a subfamily of (\ref{fami:Fisher--Bingham}) if we assume the constraint $A\mu=0$. 
\end{remark}

\begin{proof}
We use the identification $SO(n)/SO(n-1)\simeq S^{n-1}$, $gSO(n-1)\mapsto ge_1$. 
%It is clear that $v_0=(e_1, E_{11})$ is $H$-fixed by the definition of $H$. 
\begin{comment}
Next, we show: 
\begin{claim}
$v_0$ is cyclic. 
\end{claim}
Since there exists $g_i\in G$ such that $g_i(e_1,E_{11})=(e_i, E_{ii})$ for each $i\in \{1,\cdots, n\}$, it is enough to prove the case of $n=2$. 
We can easily check that $v_0$, $\rho(R(\frac{\pi}{2}))v_0$, $\rho(R(\frac{\pi}{4}))v_0$, $\rho(R(\frac{\pi}{6}))v_0$, $\rho(R(\frac{\pi}{3}))v_0$ are linearly independent, 
where $R(\theta)=\begin{pmatrix}\cos \theta & -\sin\theta\\\sin \theta & \cos \theta\end{pmatrix}$. 
Therefore, it was proved in the case of $n=2$. 
\end{comment}
Since $G=SO(n)$ is compact, from Fact~\ref{prop:triv_omega} (ii), %\ref{fact:compact}, 
we obtain $W_0(G,H)=\{0\}$. 
We identify $(\R^n\oplus \Sym(n,\R))^\vee$ with $\R^n\oplus \Sym(n,\R)$ by taking the direct sum of the standard inner product on $\R^n$ and the following inner product on $\Sym(n,\R)$: 
\[ \pairing{x}{y}=\Tr(xy)\quad (x,y\in \Sym(n,\R)). \]
Then we have for $x=g e_1\in S^{n-1}$ ($g\in SO(n)$), 
\begin{align*}
d\tilde{p}_{(\mu, A)}(x)&=\exp(-\pairing{(\mu, A)}{\rho(g)(e_1, E_{11})})dx\\
&=\exp(-\trans \mu x-\Tr(A x \trans x))dx\\
&=\exp(-\trans \mu x-\trans x A x)dx. 
\end{align*}
Since $S^{n-1}=G/H$ is compact, 
we have $\Theta=\R^n \oplus \Sym(n,\R)$. 
Therefore we obtain the family of Fisher--Bingham distributions (\ref{fami:Fisher--Bingham}). 
\end{proof}

\subsection{Hyperboloid distribution}
In this subsection, we realize the family of hyperboloid distributions \cite{jensen} on the $n$-dimensional hyperbolic space $H^n$ (see Definition~\ref{def:hyperbolic_space}) by using the natural representation of $SO_0(1,n)$. 

\begin{proposition}\label{prop:hyperboloid_dist}
Let $n$ be a positive integer and 
put $G=SO_0(1,n)$ and $H=SO(n)$. 
Let $\rho\colon G\to GL(\R^{n+1})$ be the natural representation. 
We denote by $e_0,\cdots,e_n$ the standard basis of $\R^{n+1}$. 
Then $v_0:=e_0\in \R^{n+1}$ is $H$-fixed, and by our method we obtain the family of hyperboloid distributions on the $n$-dimensional hyperbolic space $H^n$, 
\begin{align}
\{ c_\kappa\exp(\kappa\pairing{\xi}{v})d\nu(v)\}_{(\kappa,\xi)\in \R_{>0}\times H^n} \quad (v\in H^n),\label{eq:hyperboloid_dist} 
\end{align}
where $\pairing{\cdot}{\cdot}$ denotes a pseudo inner product given as (\ref{eq:1n_product}) below, $\nu$ is a $G$-invariant measure on $H^n$, which is described as 
$\sinh^{n-1}u\sin^{n-2} v_1\cdots \sin v_{n-2} du dv_1\cdots dv_{n-1}$ in hyperbolic-spherical coordinates $(u, v_1,\dots v_{n-1})$ (see section~2C in \cite{jensen} for details), 
and $c_\kappa$ is the normalizing constant written as follows:
\begin{align}
c_\kappa=\frac{\kappa^{\frac{n-1}{2}}}{(2\pi)^{\frac{n-1}{2}}2K_{\frac{n-1}{2}}(\kappa)}. \label{eq:normalizing_const_hyperbolid}
\end{align}
Here $K_\lambda$ is the modified Bessel function of the second kind and with index $\lambda$. 
\end{proposition}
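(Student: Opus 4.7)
The plan is to instantiate the construction of Section~\ref{subsec:our_method} with the stated inputs and show that the resulting family is exactly (\ref{eq:hyperboloid_dist}). First, I identify $G/H = SO_0(1,n)/SO(n)$ with the hyperboloid $H^n \subset \R^{n+1}$ via the orbit map $gH \mapsto ge_0$; this is well-defined because the stabilizer of $e_0$ under the natural action of $SO_0(1,n)$ on $\R^{n+1}$ is exactly the $SO(n)$ acting on the spatial coordinates. In particular, $v_0 = e_0$ is $H$-fixed.

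Second, I show $\Omega_0(G,H) = \{1\}$. Since $SO_0(1,n)$ is a connected semisimple Lie group, $[G,G] = G$, so any continuous homomorphism $\chi: G \to \R_{>0}$ must be trivial. This will be cited from Proposition~\ref{prop:triv_omega} (ii) or (iii), which the paper proves in the general preparatory section. Next, I identify $(\R^{n+1})^\vee$ with $\R^{n+1}$ by using the Lorentzian pairing $\pairing{u}{v} = -u_0 v_0 + u_1 v_1 + \cdots + u_n v_n$; under this identification, the contragredient representation of $\rho$ agrees with $\rho$ itself (since $SO_0(1,n)$ preserves this form), so $\pairing{\varphi}{g\cdot v_0}$ becomes a Lorentzian inner product between a fixed vector and a point of $H^n$.

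Third, since $G$ and $H$ are both unimodular, Fact~\ref{fact:multiplier} provides a $G$-invariant measure $d\mu$ on $H^n$, which we take as our relatively invariant measure. Writing $\varphi \in \R^{n+1}$ in its Lorentzian guise and a point $x \in H^n$, the density becomes
\begin{align*}
d\tilde{p}_\varphi(x) = \exp(-\pairing{\varphi}{x})\, d\mu(x).
\end{align*}
Elementary convexity on $H^n$ (the function $x \mapsto \pairing{\varphi}{x}$ is bounded below on $H^n$ iff $-\varphi$ is future timelike) shows that the integrability condition carves out exactly those $\varphi$ of the form $-\kappa \xi$ with $\kappa > 0$ and $\xi \in H^n$. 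Substituting, $-\pairing{\varphi}{x} = \kappa\pairing{\xi}{x}$, matching the claimed form of (\ref{eq:hyperboloid_dist}).

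The main obstacle is deriving the explicit normalizing constant (\ref{eq:normalizing_const_hyperbolid}). Using the $SO(n)$-invariance one may take $\xi = e_0$ without loss of generality; then in hyperbolic polar coordinates $x = (\cosh r, \sinh r \cdot \omega)$ with $\omega \in S^{n-1}$, one has $\pairing{e_0}{x} = -\cosh r$ and $d\mu = \sinh^{n-1} r\, dr\, d\sigma(\omega)$. The integral factors as $\text{vol}(S^{n-1}) \cdot \int_0^\infty e^{-\kappa \cosh r}\sinh^{n-1} r\, dr$, and the substitution $t = \cosh r$ puts this in the standard integral representation
\begin{align*}
K_\nu(\kappa) = \frac{\sqrt{\pi}\,(\kappa/2)^\nu}{\Gamma(\nu + 1/2)}\int_1^\infty e^{-\kappa t}(t^2 - 1)^{\nu - 1/2}\, dt
\end{align*}
with $\nu = (n-1)/2$. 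Combined with $\text{vol}(S^{n-1}) = 2\pi^{n/2}/\Gamma(n/2)$ and the duplication formula for $\Gamma$, this yields precisely $c_\kappa^{-1} = (2\pi)^{(n-1)/2} \cdot 2 K_{(n-1)/2}(\kappa)/\kappa^{(n-1)/2}$, completing the identification with hyperboloid distribution.
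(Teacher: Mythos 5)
Your proposal is correct and follows the same skeleton as the paper: identify $SO_0(1,n)/SO(n)$ with $H^n$ via $gH\mapsto ge_0$, get $\Omega_0(G,H)=\{1\}$ from Proposition~\ref{prop:triv_omega}~(ii) (connected semisimple), identify $(\R^{n+1})^\vee$ with $\R^{n+1}$ through the Lorentzian form, take the invariant measure, and reparametrize $\Theta$ by $(\kappa,\xi)\mapsto -\kappa\xi$. Where you genuinely differ is that the paper simply cites Barndorff--Nielsen and Jensen both for the description of $\Theta$ and for the normalizing constant (\ref{eq:normalizing_const_hyperbolid}), whereas you derive $c_\kappa$ yourself via geodesic polar coordinates and the integral representation of $K_\nu$; your computation is right (note you do not even need the duplication formula: the $\Gamma(n/2)$ from $\mathrm{vol}(S^{n-1})$ cancels against $\Gamma(\nu+\tfrac12)$ with $\nu=\tfrac{n-1}{2}$), and it makes the argument self-contained, including the special cases $n=1,2$ in the paper's remark, at the cost of fixing explicitly the Riemannian normalization of $\mu$ that the paper delegates to Jensen's Section~2C. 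One point to tighten: your criterion ``$x\mapsto\pairing{\varphi}{x}$ is bounded below on $H^n$ iff $-\varphi$ is future timelike'' is not quite the right dichotomy, since for $-\varphi$ null and future-pointing the exponent is still bounded below (e.g.\ $-\varphi=(1,1,0,\dots,0)$ gives $\pairing{\varphi}{x}=\cosh r-\sinh r\,\omega_1\geq e^{-r}>0$) yet the integral diverges, because the integrand does not decay in the null direction while the volume of that region is infinite. The statement you actually need, and should argue (or cite, as the paper does), is that $\int_{H^n}\exp(-\pairing{\varphi}{x})\,d\mu(x)<\infty$ exactly when $-\varphi$ lies in the \emph{open} forward cone, i.e.\ $\varphi=-\kappa\xi$ with $\kappa>0$, $\xi\in H^n$; in that case your polar-coordinate bound $\exp(-\kappa\cosh r)\sinh^{n-1}r$ gives integrability, and for $-\varphi$ on or outside the closed cone the same coordinates exhibit divergence. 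With that fix the proof is complete.
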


\begin{remark}[see Section~7 in \cite{Barndorff-Nielsen78}, see also \cite{jensen}]
In \cite{jensen}, he denotes by $H^n$ the $(n-1)$-dimensional hyperbolic space. 
\begin{itemize}
\item In the case $n=1$, we have $c_\kappa=\frac{1}{2K_0(\kappa)}$. 
\item In the case $n=2$, we have $c_\kappa=\frac{\kappa e^\kappa}{2\pi}$. 
\end{itemize}
\end{remark}

\begin{definition}[hyperbolic space]\label{def:hyperbolic_space}
Let $n$ be a positive integer. 
The following manifold $H^n$ with the metric which is the restriction to $H^n$ of $-dx_0^2+dx_1^2+\cdots+dx_n^2$ on $\R^{n+1}$ is called $n$-dimensional hyperbolic space. 
\[ H^n=\{ x\in \R^{n+1}\ |\ x_0>0, \pairing{x}{x}=-1 \}.  \]
Here $\pairing{\cdot}{\cdot}$ is a pseudo inner product on $\R^{n+1}$ given as follows:
\begin{align} 
\pairing{x}{y}:=-x_0y_0+x_1y_1+\cdots +x_{n}y_{n}. \label{eq:1n_product}
\end{align}
\end{definition}

A Lie group $SO_0(1,n)$, which is the identity component of $SO(1,n)$, 
acts transitively on the $n$-dimensional hyperbolic space $H^n$. 
Since the stabilizer of $e_1\in H^n$ is $SO(n)$, 
we identify $H^n$ with the homogeneous space $SO_0(1,n)/SO(n)$ as follows:
\begin{align}
SO_0(1,n)/SO(n)&\to H^n \label{eq:homo_to_hyper},\\
g SO(n) &\mapsto g\cdot e_0. 
%\TODO natural representative 
%&\mapsfrom x=\trans (x_1,x_2,\cdots, x_{n+1})
\end{align}
\begin{proof}[Proof of Proposition~\ref{prop:hyperboloid_dist}]
We identify $(\R^{n+1})^\vee$ with $\R^{n+1}$ by the pseudo inner product (\ref{eq:1n_product}). 
We have $W_0(G,H)=\{0\}$ from Fact~\ref{prop:triv_omega} (iii). 
Take the $G$-invariant measure $\nu$ on $H^n$ as in Proposition~\ref{prop:hyperboloid_dist}. 
Then we have 
\[ d\tilde{p}_y(v)=\exp(-\pairing{y}{v})d\nu(v)\quad (y\in \R^{n+1}, v=ge_0\in H^n). \]

\begin{fact}[\cite{Barndorff-Nielsen78,jensen}]
$\Theta=\{y\in \R^{n+1}\ |\ y_0<0, \pairing{y}{y}<0 \}$. 
\end{fact}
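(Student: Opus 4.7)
The plan is to exploit the $G$-invariance of both the measure $d\mu$ and the bilinear form $\pairing{\cdot}{\cdot}$ in order to reduce the question of convergence of $\int_{H^n}\exp(-\pairing{y}{v})d\mu(v)$ to a short case analysis on $G$-orbits of $y$ in $\R^{n+1}$. Explicitly, for any $g\in G=SO_0(1,n)$, the change of variable $v\mapsto gv$ combined with the identity $\pairing{y}{gv}=\pairing{g^{-1}y}{v}$ shows that the integral depends on $y$ only through its $G$-orbit. So the first step is to replace $y$ by a convenient representative of its orbit.

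The second step is to classify the $G$-orbits on $\R^{n+1}$. Using the well-known orbit picture for $SO_0(1,n)$ acting on its defining representation, the orbits are indexed by the value of $\pairing{y}{y}$, together with the sign of $y_0$ when $y\neq 0$ and $\pairing{y}{y}\leq 0$. A complete list of representatives can be taken as $y=0$; $y=r e_1$ with $r>0$ (spacelike); $y=\pm(e_0+e_1)$ (future/past null); and $y=\pm s e_0$ with $s>0$ (future/past timelike). I would then check each case against the integrability condition.

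The third step is the case analysis itself. For $y=0$, the integral equals $\mu(H^n)=\infty$. For $y=re_1$, the integrand $\exp(-rv_1)$ is unbounded on $H^n$ as $v_1\to -\infty$, so the integral diverges. For the null representatives $\pm(e_0+e_1)$ and for $y=s e_0$ with $s>0$, analogous pointwise blowup arguments along suitable curves in $H^n$ (using the explicit parametrization below) show divergence. Only the case $y=-s e_0$ with $s>0$, which is exactly the condition $y_0<0$ and $\pairing{y}{y}<0$ on a representative, remains.

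The final step is to verify convergence in this remaining case. For this I would use the polar-type parametrization $v=(\cosh t,\sinh t\cdot\omega)$ with $t\in[0,\infty)$ and $\omega\in S^{n-1}$, under which the $G$-invariant measure takes the form $d\mu=c\sinh^{n-1}(t)\,dt\,d\omega$ for some $c>0$. The integral then reduces to
\[
c\cdot\mathrm{vol}(S^{n-1})\int_0^\infty e^{-s\cosh t}\sinh^{n-1}(t)\,dt,
\]
which converges for every $s>0$ because the integrand decays doubly exponentially as $t\to\infty$. The only real obstacle is writing the orbit classification cleanly; the integrability checks themselves are routine once representatives have been chosen.
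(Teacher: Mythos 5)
The paper itself does not prove this statement; it quotes it as a known fact from Barndorff--Nielsen and Jensen. Your strategy---use $G$-invariance of $\mu$ and of the form to reduce to orbit representatives, then do a case analysis---is the standard and reasonable way to make it self-contained, and your convergence check for $y=-se_0$ is correct. However, there is a genuine gap in the divergence half, exactly at the boundary case that forces the strict inequality $\pairing{y}{y}<0$: the past-pointing \emph{null} representative $y=-(e_0+e_1)$. For that $y$ there is no ``pointwise blowup along suitable curves'': one has $\pairing{y}{v}=v_0-v_1>0$ for every $v\in H^n$ (indeed $v_0-v_1\geq e^{-t}$ in the polar parametrization), so the integrand $\exp(-\pairing{y}{v})$ is bounded by $1$ and tends to $1$, not to $\infty$, along the curve $v=(\cosh t,\sinh t,0,\dots,0)$. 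Divergence is still true, but it comes from a measure estimate, not from unboundedness of the integrand: e.g.\ the set $\{v\in H^n\ :\ \pairing{y}{v}\leq 1\}$ has infinite invariant measure (in polar coordinates the $\omega$-measure of $\{\omega_1\geq\tanh(t/2)\}$ is of order $e^{-(n-1)t/2}$, which against the factor $\sinh^{n-1}t$ gives an integrand of order $e^{(n-1)t/2}$ in $t$, and for $n=1$ the set is simply $\{t\geq 0\}$), so the integral exceeds $e^{-1}\cdot\infty$. Without some such Laplace-type or sublevel-set estimate, your case analysis does not rule out the past-pointing null vectors, i.e.\ it proves only $\Theta\supseteq\{y_0<0,\ \pairing{y}{y}<0\}$ and $\Theta\subseteq\{y_0<0,\ \pairing{y}{y}\leq 0\}$.

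Two smaller points in the same spirit. First, ``the integrand is unbounded, so the integral diverges'' (spacelike case $y=re_1$) is not a valid inference by itself; the correct remark is that the integrand is $\geq 1$ on $\{v_1\leq 0\}$, a set of infinite invariant measure (the same fix works for the future null and future timelike cases, where blowup does occur). Second, your orbit list is complete only for $n\geq 2$; for $n=1$ (which the proposition allows) the spacelike locus and the null cone each split into further $SO_0(1,1)$-orbits (e.g.\ $\pm re_1$ and the four null rays), though the added representatives are handled by the same estimates.
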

Since there is a natural bijection $\R_{>0}\times H^n \to \Theta$, 
$(\kappa,\xi)\mapsto -\kappa\xi$, we use $\R_{>0}\times H^n$ as a parameter space. 
\begin{fact}[Section~7 in \cite{Barndorff-Nielsen78}]
The normalizing constant of $\exp(-\pairing{y}{v})d\nu(v)=\exp(\kappa \pairing{\xi}{v})d\nu(v)$ is given as (\ref{eq:normalizing_const_hyperbolid}). 
\end{fact}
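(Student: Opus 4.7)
The plan is to reduce the integral to a one-dimensional Bessel integral. First, by the $G$-invariance of the measure $\mu$ and of the pseudo inner product $\pairing{\cdot}{\cdot}$, together with the transitivity of $G = SO_0(1,n)$ on $H^n$, we may replace $\xi$ with any point in its $G$-orbit; taking $\xi = e_0$ gives $\pairing{\xi}{v} = -v_0$, and the integral becomes $\int_{H^n} e^{-\kappa v_0}\, d\mu(v)$. Finiteness for $\kappa>0$ is immediate since $v_0 \geq 1$ on $H^n$.

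Next, I would introduce geodesic polar coordinates on $H^n$, writing $v_0 = \cosh t$ and $(v_1,\dots,v_n) = \sinh t\cdot \omega$ with $t \geq 0$ and $\omega \in S^{n-1}$. In these coordinates the $G$-invariant measure factorises as $d\mu(v) = \sinh^{n-1}(t)\,dt\,d\omega$, where $d\omega$ is the round measure on $S^{n-1}$ of total mass $\mathrm{Vol}(S^{n-1}) = 2\pi^{n/2}/\Gamma(n/2)$. The integrand is independent of $\omega$, so
\begin{equation*}
\int_{H^n} e^{-\kappa v_0}\, d\mu(v) = \frac{2\pi^{n/2}}{\Gamma(n/2)} \int_0^\infty e^{-\kappa \cosh t}\,\sinh^{n-1}(t)\,dt.
\end{equation*}

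Finally, I would invoke the classical integral representation
\begin{equation*}
K_\nu(\kappa) = \frac{\sqrt{\pi}\,(\kappa/2)^\nu}{\Gamma(\nu+\tfrac12)}\int_0^\infty e^{-\kappa \cosh t}\sinh^{2\nu}(t)\,dt
\end{equation*}
applied at $\nu = (n-1)/2$, and then collect the powers of $2$, $\pi$ and the gamma factors (using $\Gamma((n-1)/2+1/2) = \Gamma(n/2)$, so the two gamma factors cancel exactly). Rearranging yields $c_\kappa^{-1} = 2(2\pi)^{(n-1)/2} K_{(n-1)/2}(\kappa)/\kappa^{(n-1)/2}$, which is \eqref{eq:normalizing_const_hyperbolid}.

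The genuinely nontrivial input is the integral representation of $K_\nu$, which I would treat as a known identity; beyond that, the main obstacle is bookkeeping, making sure the normalisation of $d\mu$ in polar coordinates, the volume of $S^{n-1}$, and the Bessel formula are used with consistent conventions so that the gamma factors cancel cleanly.
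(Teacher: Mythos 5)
Your derivation is correct, and the arithmetic checks out: reducing to $\xi=e_0$ by transitivity and invariance of $\mu$ and of the pseudo inner product, writing $d\mu=\sinh^{n-1}(t)\,dt\,d\omega$ in geodesic polar coordinates, inserting $\mathrm{Vol}(S^{n-1})=2\pi^{n/2}/\Gamma(n/2)$, and applying the representation $K_\nu(\kappa)=\frac{\sqrt{\pi}(\kappa/2)^\nu}{\Gamma(\nu+1/2)}\int_0^\infty e^{-\kappa\cosh t}\sinh^{2\nu}(t)\,dt$ at $\nu=\frac{n-1}{2}$ indeed gives $c_\kappa^{-1}=2(2\pi)^{\frac{n-1}{2}}\kappa^{-\frac{n-1}{2}}K_{\frac{n-1}{2}}(\kappa)$, matching (\ref{eq:normalizing_const_hyperbolid}); the sanity check $n=1$ (giving $2K_0(\kappa)$) also agrees with the paper's remark. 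Note, however, that the paper does not prove this statement at all: it is quoted as a Fact from Section~7 of \cite{Barndorff-Nielsen78} (with the measure realized as in Section~2C of \cite{jensen}), so your argument is a self-contained derivation replacing the citation rather than an alternative to a proof in the paper; it is in fact the standard route behind the cited result. The only point worth making explicit is that the invariant measure $\mu$ is unique only up to a positive scalar, so the stated constant presupposes a specific normalization; your choice, the Riemannian volume measure with polar decomposition $\sinh^{n-1}(t)\,dt\,d\omega$, is the one consistent with the cited references, and you should say so rather than leave it implicit under ``bookkeeping.''
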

Therefore, we obtain the family of hyperboloid distributions (\ref{eq:hyperboloid_dist}). 
\end{proof}

\subsubsection{A family of distributions on the upper half plane}\label{sec:poincare_dist}
In this subsubsection, via a diffeomorphism between $H^2$ and the upper half plane $\mathcal{H}$, we realize the corresponding family of distributions on the upper half plane to the family above of hyperboloid distributions for $n=2$. 
%Since the distribution on the upper half plane is compatible with the Poincar\'e metric, we call it {\it Poincar\'e distribution},
%
%In this section, we construct a new exponential family on the upper half plain which is compatible with the Poincar\'e metric by using a representation of $SL(2,\R)$.  
Since the exponential family on the upper half plane is compatible with the Poincar\'e metric, in this paper, 
we call it the family of {\it Poincar\'e distributions}, which is given as follows: 
\begin{align}\left\{ \frac{D\ e^{2D}}{\pi} \exp\left(- \frac{a(x^2+y^2)+2bx+c }{y} \right)\frac{dxdy}{y^2} \right\}_{\begin{pmatrix}a & b\\b &c\end{pmatrix}\in \Sym^+(2,\R)}, \label{def:poincare_dist}\end{align}
%\[\left\{ \frac{\sqrt{\det \varphi}\ e^{2\sqrt{\det \varphi}}}{\pi} \exp(- \Tr (\varphi \begin{pmatrix}y+\frac{x^2}{y} & \frac{x}{y}\\\frac{x}{y} & \frac{1}{y} \end{pmatrix}) )\frac{dxdy}{y^2} \right\}_{\varphi\in \Sym^+(2,\R)}\]
where $D=\sqrt{ac-b^2}$.

First, let us recall the definition of the upper half plane. 
\begin{definition}[upper half plane]
The following manifold $\mathcal{H}$ with Poincar\'e metric $\frac{dx^2+dy^2}{y^2}$ is called upper half plane:
\[ \mathcal{H}=\{ x+iy \in \C\ |\ y>0\}. \]
\end{definition}
The Lie group $SL(2,\R)$ acts transitively on the upper half plane $\mathcal{H}$ by the M\"obius transformations given by:
\[ g\cdot z:= \frac{az+b}{cz+d} \quad \left(g=\begin{pmatrix}a & b \\ c& d\end{pmatrix}\in SL(2,\R)\right). \]
Since the stabilizer of $i$ is $SO(2)$, we identify $\mathcal{H}$ with 
the homogeneous space $SL(2,\R)/SO(2)$ as follows:
\begin{align}  
\mathcal{H}&\leftrightarrow SL(2,\R)/SO(2)\label{eq:upper_half_homo},\\
z=x+iy&\mapsto\begin{pmatrix}\sqrt{y} & \frac{x}{\sqrt{y}}\\0 & \frac{1}{\sqrt{y}} \end{pmatrix}SO(2), \\
g \cdot i& \mapsfrom g SO(2). 
\end{align}

Moreover, we use the following diffeomorphism:
\begin{align}
SL(2,\R)/SO(2)\to SO_0(1,2)/SO(2), \label{eq:diffeo_bet_homo}
\end{align}
which is induced by the following $2:1$ group homomorphism:
\begin{align}
SL(2,\R)\to SO_0(1,2), g\mapsto \Ad(g). 
\end{align}
Here since $SL(2,\R)$ is connected and the adjoint representation $\Ad\colon SL(2,\R)\to GL(\frak{sl}(2,\R))$ 
preserves the pseudo inner product $\frac{1}{2}\Tr(xy)$ with signature $(2,1)$ on $\frak{sl}(2,\R)$, the above map is well-defined. 
We use a basis $f_1=\begin{pmatrix}0 & -1 \\1 &0\end{pmatrix}$, $f_2=\begin{pmatrix}0 & 1\\1 & 0\end{pmatrix}$, $f_3=\begin{pmatrix}1 & 0 \\ 0 & -1\end{pmatrix}$ of $\frak{sl}(2,\R)$. 
Since $SO(2)\subset SL(2,\R)$ stabilizes $f_1$, we have $\Ad(SO(2))=\left\{\begin{pmatrix}1 & \\ & g\end{pmatrix}\in SO_0(1,2)\ \Big|\ g\in SO(2)\right\}$. 
From the composition of (\ref{eq:upper_half_homo}), (\ref{eq:diffeo_bet_homo}) and (\ref{eq:homo_to_hyper}), we have the following correspondence:
\begin{align}
\mathcal{H}&\to H^2,\\
x+iy&\mapsto \frac{1}{2y}\begin{pmatrix}1+x^2+y^2\\1-(x^2+y^2)\\ 2x \end{pmatrix}=\begin{pmatrix}v_0\\v_1\\v_2\end{pmatrix}. %\\
%\frac{v_2}{v_0+v_1}+ i \frac{1}{v_0+v_1}&\mapsfrom \begin{pmatrix}v_0\\v_1\\v_2 \end{pmatrix}
\end{align}
Therefore, by putting $a=\frac{\xi_0+\xi_1}{2\pi}\kappa$, $b=\frac{\xi_2}{2}\kappa$ and $c=\frac{\xi_0-\xi_1}{2}\kappa$, we obtain the Poincar\'e distribution (\ref{def:poincare_dist}) from the hyperboloid distribution 
$\{\frac{\kappa e^\kappa}{2\pi}\exp(\kappa(-\xi_0v_0+\xi_1v_1+\xi_2v_2))d\nu(v)\}_{(\kappa,\xi)\in \R_{>0}\times H^2}$.

\section*{Acknowledgement}
The authors would like to thank Professors Hideyuki Ishi, Fr\'ed\'eric Barbaresco, Toshiyuki Kobayashi, Kenichi Bannai, Kei Kobayashi and Shintaro Hashimoto for valuable comments.  
The authors would also like to thank Atsushi Suzuki, Kenta Oono and Kentaro Minami for helpful comments and discussion. 
This work was supported by JST, ACT-X Grant Number JPMJAX190K, Japan. 

%\section{TODO}
%\begin{itemize}
%\item 相対不変測度のリファレンス：基点を決めて書くことができる話
%\item Siegel integral(Wishart distributionのところ)のリファレンス
%\item なぜcyclicかについて説明すべきか？原理的にはcyclicでなくても分布族を構成できる．
%\item Introを書く．
%\item von Mises distの一般化としてvon Mises--Fisher distを書く．黄色の本Example8.3参照
%\item Fact~\ref{fact:fin_dim}のreferenceを与える．:小林先生に相談?．
%\end{itemize}

%\section*{Conflict of interest}
%The authors have no competing interests to declare that are relevant to the content of this article. 
%\section*{Data availability}
%Data sharing not applicable to this article as no datasets were generated or analysed during the current study. 

\end{document}